\documentclass[twoside,11pt]{amsart}

\usepackage[english]{babel}
\usepackage{url}
\usepackage{graphicx}
\usepackage[colorinlistoftodos]{todonotes}
\usepackage{amsmath,amscd,amsthm,amstext,amsfonts,amssymb}
\usepackage{tikz-cd}
\usepackage{fancyhdr}
\usepackage{enumerate}
\usepackage{xcolor}

\usepackage[colorlinks=true,linkcolor=blue,citecolor=blue,urlcolor=blue]{hyperref}
\usepackage{cleveref}

\numberwithin{equation}{section}
\newtheorem{thm}[equation]{Theorem}
\newtheorem*{main}{Theorem}

\newtheorem{prop}[equation]{Proposition}
\newtheorem{cor}[equation]{Corollary}
\newtheorem{lem}[equation]{Lemma}

\theoremstyle{definition}

\newtheorem{qu}[equation]{Question}
\newtheorem{rem}[equation]{Remark}

\renewcommand{\dim}{\mathsf{dim}}
\renewcommand{\deg}{\mathsf{deg}}
\renewcommand{\bmod}{\,\mathsf{mod}\,}
\newcommand\ind{\operatorname{\mathsf{ind}}}

\newcommand\End{\operatorname{\mathsf{End}}}

\newcommand\id{\operatorname{\mathsf{id}}}

\newcommand\hh{\operatorname{\mathbb{H}}}
\newcommand{\car}{\mathsf{char}}
\newcommand{\can}{\operatorname{\mathsf{can}}}

\newcommand{\vf}{\varphi}
\newcommand{\mg}[1]{{#1}^{\times}}
\newcommand{\sq}[1]{{#1}^{\times 2}}
\newcommand{\scg}[1]{\mg{#1}/\sq{#1}}
\newcommand{\s}{\sigma}
\newcommand{\nat}{\mathbb{N}}

\newcommand{\la}{\langle}
\newcommand{\ra}{\rangle}
\newcommand{\lla}{\la\!\la}
\newcommand{\rra}{\ra\!\ra}

\renewcommand{\leq}{\leqslant}
\renewcommand{\geq}{\geqslant}

\newcommand\Ad{\operatorname{\mathsf{Ad}}}
\newcommand\ad{\operatorname{\mathsf{ad}}}

\newcommand{\mc}{\mathcal}
\newcommand{\N}{\mathsf{N}}

\newcommand{\dyn}[1]{\mathrm{#1}}
\newcommand{\disc}{\mathsf{disc}}
\newcommand{\Sim}{{\bf\mathsf{Sim}}}
\newcommand{\PSim}{{\bf\mathsf{PSim}}}
\newcommand{\G}{\mathsf{G}}

\newcommand{\C}{\mathsf{C}}

\newcommand{\Hyp}{\mathsf{Hyp}}

\newcommand{\D}{\mathsf{D}}

\newcommand{\wi}{\mathsf{i}}

\newcommand{\cc}{\mathbb{C}}

\newcommand{\wt}{\widetilde}

\title{Totally decomposable algebras with involution and \texorpdfstring{$R$}{R}-triviality}

\date{18 August, 2026}

\author{M.~Archita}
\author{Karim Johannes Becher}

\address{University of Antwerp, Department of Mathematics, Antwerp, Belgium.}
\email{karimjohannes.becher@uantwerpen.be}
\email{archita.mondal@uantwerpen.be}

\begin{document}

\begin{abstract}
We show that the group of proper projective similitudes of a totally decomposable algebra with orthogonal or symplectic involution of index at most $2$ over a field of characteristic different from $2$ is $R$-trivial.
We further give an example showing that the statement does not extend to index $4$ in the orthogonal case.

\medskip\noindent
{\sc Keywords:} 
Classical adjoint algebraic group, stably rational, $R$-equiva\-lence, quadratic form, adjoint algebra with involution, orthogonal, symplectic, projective similitude, hyperbolic, Pfister form, triality

\medskip\noindent
{\sc Classification (MSC 2020):} 11E04, % Quadratic forms over general fields 
11E57, % Classical groups
11E81, % Algebraic theory of quadratic forms; Witt groups and rings
14E08, % Rationality questions in algebraic geometry
20G15 % Linear algebraic groups over arbitrary fields
\end{abstract}

\maketitle

\section{Introduction}

It had been conjectured in \cite[\S 7.2, Conjecture]{PR94} that semisimple adjoint linear algebraic groups are rational.
This turned out to be correct for groups of type ${}^1\!\dyn{A}$ and $\dyn{B}$, but wrong in general for adjoint semisimple groups of  types ${}^2\!\dyn{A}$, $\dyn{C}$ and $\dyn{D}$.

By Weil's classification \cite{Wei61}, any absolutely simple adjoint group is given as the group of proper projective similitudes of some suitable central simple algebra with involution or quadratic pair; see also \cite[\S~26]{KMRT98}.

For an algebraic group $\mc{G}$ over a field $K$, the set of $R$-equivalence classes $\mc{G}(F)/R$ for an arbitrary field extension $F/K$ has a group structure, and it provides an obstruction to $\mc{G}$ being  stably rational.
The group $\mc{G}$ is called \emph{$R$-trivial} if the group $\mc{G}(F)/R$ is trivial for every field extension $F/K$.
By \cite[Prop.~1]{Mer96}, every stably rational group is $R$-trivial.

Let $K$ be a field of characteristic different from $2$.
Let $(A,\s)$ be a \emph{$K$-algebra with involution}, that is, $A$ is a central simple $K$-algebra and $\s$ is a $K$-linear involution on $A$. (Hence we restrict our consideration to involutions of the first kind.)
Let $\mathbf{PSim}(A,\s)$ denote the algebraic group of  projective similitudes of $(A,\s)$ and $\mathbf{PSim}^+(A,\s)$ the connected component of the identity.
The latter is a classical absolutely simple adjoint linear algebraic group, which is of type $\dyn{C}$ if $\s$ is symplectic, respectively of type $\dyn{B}$ or $\dyn{D}$ if $\s$ is orthogonal, depending on whether the degree of $A$ is odd or even.
In \cite{Mer96}, Merkurjev investigated the problem of deciding whether $\mathbf{PSim}^+(A,\s)$ is rational, stably rational or $R$-trivial. 
For a field extension $F/K$, \cite[Theorem 1]{Mer96} identifies the group of $R$-equivalence classes $\mathbf{PSim}^+(A,\s)(F)/R$ with a quotient of two subgroups of $\mg{F}$ related to $(A,\s)$.
We recall this characterisation in \Cref{Mer:T1}.
On the one hand, this is the basis of various constructions of adjoint groups which are not stably rational.
On the other hand, it can be used to establish sufficient conditions on $(A,\s)$ for the $R$-triviality of $\mathbf{PSim}^+(A,\s)$.

In this article, we prove that one such case is given by totally decomposable algebras with orthogonal or symplectic involution of index at most $2$.
We call $(A,\s)$ \emph{totally decomposable} 
if $$(A,\s)\simeq \Ad(\psi)\otimes\left(\bigotimes_{i=1}^r (Q_i,\tau_i)\right)$$
for some $r\in\nat$, $K$-quaternion algebras with involution $(Q_i,\tau_i)$ for $1\leq i\leq r$ and the adjoint algebra with involution $\Ad(\psi)$ (whose definition we recall in \Cref{S:simfact}) of some odd-dimensional quadratic form $\psi$ over $K$. (Taking $\psi=\la 1\ra$ corresponds to omitting the factor $\Ad(\psi)$, and sometimes, this more restrictive condition is taken for total decomposability.)

If $\s$ is orthogonal and $A$ is split, then we obtain by \cite[Theorem 1]{Bec08} that $(A,\s)\simeq \Ad(\psi\otimes \pi)$
for some Pfister form $\pi$ over $K$, and in this case it follows by \cite[Prop.~7]{Mer96} that $\mathbf{PSim}^+(A,\s)$ is stably rational, and in particular $R$-trivial.
One may ask whether these conclusions still hold without assuming that $A$ is split.
In this article, we show:

\begin{main}[\Cref{main}]
Let $(A,\sigma)$ be a totally decomposable $K$-algebra with involution of the first kind such that $\ind A\leq 2$. Then the linear algebraic group $\mathbf{PSim}^+(A,\s)$ is $R$-trivial. 
\end{main}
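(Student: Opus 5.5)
The plan is to use Merkurjev's description in \Cref{Mer:T1}. For every field extension $F/K$ it identifies $\mathbf{PSim}^+(A,\s)(F)/R$ with $G^+(A_F,\s_F)/\Hyp(A_F,\s_F)\cdot\sq{F}$. Here $G^+(A,\s)$ is the group of multipliers of proper similitudes. $\Hyp(A,\s)$ is the subgroup generated by the norms $\N_{L/F}(\mg{L})$, taken over finite extensions $L/F$ for which $(A_L,\s_L)$ is hyperbolic. Total decomposability and the condition $\ind A\leq 2$ are preserved under scalar extension, so it suffices to fix $F=K$ and prove the inclusion $G^+(A,\s)\subseteq \Hyp(A,\s)\cdot\sq{K}$. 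The factor $\Ad(\psi)$ with $\dim\psi$ odd is harmless: tensoring with it changes neither the multiplier groups nor hyperbolicity over any extension, since $\Ad(\psi)$ is split of odd degree and can be cancelled up to similarity. So I would reduce to $(A,\s)\simeq\bigotimes_{i=1}^r(Q_i,\tau_i)$.

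Next I would exploit the index condition. If $A$ is split and $\s$ is orthogonal, the introduction already settles the case via \cite{Bec08} and \cite[Prop.~7]{Mer96}. If $A$ is split and $\s$ is symplectic, then $\s$ is hyperbolic and $G^+=\Hyp$ trivially. So the remaining case is $\ind A=2$. Here I would use the known structure result for totally decomposable algebras with involution of index $\leq2$: one can write $(A,\s)\simeq (Q,\tau)\otimes\Ad(\pi)$, where $Q$ is a quaternion algebra Brauer equivalent to $A$ and $\pi$ is a Pfister form. In the orthogonal case $\tau$ is orthogonal, or alternatively one writes $(A,\s)\simeq(Q,\bar{\ })\otimes \Ad(\pi')$ with $\pi'$ a symplectic-type Pfister form; in the symplectic case $\tau$ is canonical. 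I expect this to follow from the earlier sections by splitting off one quaternion factor, using that $A$ has index $2$, and applying \cite[Theorem 1]{Bec08} to the complementary split factor. Then $\s$ is adjoint to a hermitian form $h=\la\!\la a_1,\dots,a_n\ra\!\ra\otimes\la\lambda\ra$ over $(Q,\bar{\ })$, where $\lambda$ is either $1$ or a pure quaternion. This makes the multipliers explicit: a multiplier of a proper similitude corresponds to $c$ with $c\,h\simeq h$, subject to a properness condition.

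The main obstacle is the key step of showing that each $c\in G^+(A,\s)$ lies in $\Hyp(A,\s)\cdot\sq{K}$. My first attempt is to compare with the split situation over the function field $K(Q)$ of the conic, or over quadratic splitting fields $L=K(\sqrt{d})\subseteq Q$. In the symplectic case with $\lambda=1$, hyperbolicity of $h$ over $L$ together with the Pfister similarity factor theorem should show that $c$ is a product of a norm from some $L$ splitting $\pi\otimes\N_Q$ and a similarity factor of $\pi$. Similarity factors of $\pi$ are represented values of $\pi$, and these are norms from quadratic extensions over which $\pi$, hence $h$, becomes hyperbolic. In the orthogonal case with $h=\pi\otimes\la\lambda\ra$ and $\lambda^2=\mu$, I would pass to the associated quadratic form $\pi\otimes\la\!\la\mu\ra\!\ra$-type invariants and use that $\disc\s$ and the Clifford algebra constrain $c$. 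The properness condition should force $c$ to be a similarity factor of the Pfister form $\pi\otimes\N_{Q}$-part up to norms from $K(\sqrt{\mu})$, over which $Q$ and hence $A$ split and the situation reduces to the split case already handled. The delicate point is making the norm principle work for non-split $Q$. For this I would use Knus--Parimala--Sridharan style results: multipliers of similitudes of $\la\!\la a_1,\dots,a_n\ra\!\ra\otimes\la\lambda\ra$ are generated by represented values of the associated $K$-quadratic Pfister form. Each represented value $x$ is a norm from $K(\sqrt{-x'})$, and the form becomes hyperbolic there.
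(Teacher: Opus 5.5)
\textbf{Verdict: there is a genuine gap.} The orthogonal case with $\ind A=2$, which is the whole content of the theorem, is not proved.

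\textbf{What is fine.} Your overall frame matches the paper. By \Cref{Mer:T1} it suffices to show $\G^+(A,\s)\subseteq\sq{K}\Hyp(A,\s)$ over every base field. The split cases are immediate. One then works from a decomposition $(A,\s)\simeq\Ad(\psi\otimes\rho)\otimes(Q,\gamma)$ with $\rho$ a Pfister form.

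Two justifications need correcting. This decomposition is \cite[Theorem~2 and Corollary]{Bec08}. It does not follow by ``splitting off one quaternion factor'': the given $Q_i$ may all be nonsplit, and re-decomposing while keeping the involution decomposed is the nontrivial part. Also, the $\Ad(\psi)$ factor is harmless because of \Cref{odddimnonzerodiv}, not because it ``cancels up to similarity''.

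The symplectic case is fine once stated directly. By \Cref{Jacobson}, $\G(A,\s)=\G(\rho\otimes\nu)$, and $\s_{K'}$ is hyperbolic if and only if $(\rho\otimes\nu)_{K'}$ is. So \Cref{Pfisterform} gives a single quadratic extension.

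\textbf{Where the gap is.} In the orthogonal index-$2$ case your text offers no argument. ``The properness condition should force $c$ to be a similarity factor of the $\pi\otimes N_Q$-part up to norms from $K(\sqrt{\mu})$'' is a hope, not a step. The appeal to a ``Knus--Parimala--Sridharan style'' result is unsubstantiated: you claim multipliers are generated by values of an associated $K$-quadratic Pfister form over whose quadratic splitting fields $\s$ becomes hyperbolic. For skew-hermitian forms over a nonsplit $Q$ there is no Jacobson-type quadratic form over $K$. The obvious candidate also fails. With $Q=(a,b)_K$, the form $\rho\otimes\nu$ is hyperbolic over $K(\sqrt{b})$. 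But there $(A,\s)$ becomes $\Ad(\psi\otimes\lla a\rra\otimes\rho)$, which need not be hyperbolic. For instance, take $K=\mathbb{R}(t)$, $a=-1$, $b=t$, $\rho=\la 1,1\ra$: over $K(\sqrt{t})$ the form $\lla a\rra\otimes\rho$ is anisotropic, yet $\rho\otimes\nu$ is hyperbolic.

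\textbf{What is missing.} You need an exact description of the multiplier group and a usable hyperbolicity criterion. Take $\disc\gamma=a\sq{K}$, $Q=(a,b)_K$, $\pi=\lla a\rra\otimes\rho$ and $\wt{\pi}=\lla a\rra\otimes(\la b\ra\perp\rho')$. The paper's steps are:
\begin{enumerate}
\item[(1)] Pass to the function field $L$ of the norm form $\nu$ of $Q$, where $(A,\s)_L\simeq\Ad(\psi\otimes\pi)_L$.
\item[(2)] Use Dejaiffe's theorem to descend hyperbolicity and multipliers from $L$ to $K$.
\item[(3)] Use the subform theorem to get $\G(A,\s)=\D(\pi)\cdot\D(\wt{\pi})$, and that $\s_{K'}$ is hyperbolic if and only if $\wt{\pi}_{K'}$ is isotropic (\Cref{L:Pfister-fufi-G}, \Cref{ad}).
\item[(4)] Since $\wt{\pi}$ is not a Pfister form, a value $c_2\in\D(\wt{\pi})$ is not directly a norm from a quadratic extension that makes $\s$ hyperbolic. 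The decisive trick is to pick $e\in\D(\rho')$, which exists when $4\mid\deg A$ (the case $\deg A\equiv 2\bmod 4$ is separate and easy). Write $c=(e^{-1}c_1)(ec_2)$ with $c_1\in\D(\pi)$ and $c_2\in\D(\wt{\pi})$.
\item[(5)] The factor $e^{-1}c_1$ lies in $\D(\pi)=\G(\pi)$, so \Cref{Pfisterform} applies to it.
\item[(6)] Both $1$ and $ec_2$ lie in $\D(e\wt{\pi})$, hence in a binary subform $\lla d_2\rra$ of $e\wt{\pi}$. So $\wt{\pi}$ becomes isotropic over $K(\sqrt{d_2})$, and therefore $\s$ becomes hyperbolic there.
\end{enumerate}
Without these ingredients or an equivalent, your proposal does not establish $\G^+(A,\s)\subseteq\sq{K}\Hyp(A,\s)$ in the orthogonal index-$2$ case.
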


In the case where $A$ is split or $\s$ is symplectic, this is a direct consequence of the Pfister Factor Conjecture proven in \cite{Bec08}.
The interesting case is when $\s$ is orthogonal and $\ind A=2$.
% For the proof, we crucially rely on the characterization of $R$-equivalence in $\mathbf{PSim}^+(A,\s)$ from \cite[Theorem 1]{Mer96}. 

It turns out that the hypothesis that $\ind A\leq 2$ is necessary for the conclusion in \Cref{main}, at least when $\s$ is orthogonal.
An example to show this is obtained with \Cref{C:ind4counterex} over the field $K=\cc(X,Y,Z)$ by the  totally decomposable  
$K$-algebra with orthogonal involution  $$(A,\s)=\Ad\lla X\rra\otimes(Q_1,\can_{Q_1})\otimes (Q_2,\can_{Q_2})$$
where $Q_1=(Y,Z)_K$, $Q_2=((X-1)Y,(X+1)Z)_K$,
and $\can_{Q_i}$ denotes the canonical involution on $Q_i$ for $i\in\{1,2\}$.
Here $\mathbf{PSim}^+(A,\s)(K)/R=\{1\}$, but there exists a field extension $F/K$ such that $\mathbf{PSim}^+(A,\s)(F)/R\neq \{1\}$, whereby $\mathbf{PSim}^+(A,\s)$ is not $R$-trivial.

In light of the recent result of Biswas \cite{Bis26}, this also provides another case where the so-called \emph{Weak Approximation} fails: for some field extension $L/K$ and a discrete valuation $v$ with corresponding completion $L^v$, the $L$-rational points $\mathbf{PSim}^+(A,\s)(L)$ are not dense in $\mathbf{PSim}^+(A,\s)(L^v)$.

\section{Similarity factors}
\label{S:simfact}

We use standard notation and terminology from quadratic form theory, taking \cite{EKM08} and \cite{Lam05} as main references.
For basic concepts on central simple algebras and their involutions, we refer to \cite[Chap.~8]{Scha85} and \cite{KMRT98}.
By a \emph{quadratic form over $K$} we always mean a regular quadratic form, given by a quadratic map  on a finite-dimensional $K$-vector space.

Given a quadratic form $\vf$ over $K$, defined on the $K$-vector space $V_\vf$,
we associate the symmetric 
$K$-bilinear form $$b_\vf:V_\vf\times V_\vf\to K,\,(x,y)\mapsto \vf(x+y)-\vf(x)-\vf(y)$$ and its 
\emph{adjoint algebra with involution} $$\Ad(\vf)=(\End(V_\vf),\ad_\vf).$$
Here $\ad_\vf:\End_K(V_\vf)\to\End_K(V_\vf)$ denotes the $K$-linear involution determined by
$$ b_\vf(x,f(y))=b_\vf(\ad_\vf(f)(x),y)\text{ for every } x,y\in V_\vf\text{ and } f\in\End_K(V_\vf).$$
Note that $\Ad(\vf)$ is a split $K$-algebra with orthogonal involution.

Let $(A,\s)$ be a $K$-algebra with involution. We call $\s$ \emph{hyperbolic} if there exists an element $e\in A$ such that $e^2=e$ and $\s(e)=1-e$. 
We call
$(A,\s)$ \emph{hyperbolic} if $\s$ is hyperbolic.
This can only occur when $\deg A$ is a multiple of $2\ind A$, so in particular $\deg A$ must be even.
We call $(A,\s)$ \emph{split hyperbolic} if $A$ is split and $\s$ is hyperbolic.
For a field extension $L/K$, we obtain an $L$-algebra with involution $(A,\s)_L=(A_L,\s_L)$, where $A_L=A\otimes_KL$ and $\s_L=\s\otimes\id_L$.

We set $\Sim(A,\s)=\{x\in A\mid \s(x)x\in \mg{K}\}$, which is a subgroup of $\mg{A}$. 
We 
denote by $\PSim(A,\s)$ the quotient group $\Sim(A,\s)/\mg{K}$.
The elements of $\Sim(A,\s)$ are called \emph{similitudes of $(A,\s)$}, and those of $\PSim(A,\s)$ are called \emph{projective similitudes of $(A,\s)$}.

Letting 
$$\mathbf{PSim}(A,\s)(F)=\PSim(A_{F},\s_{F})$$
for every extension $F/K$ defines a linear algebraic group over $K$. We denote by $\mathbf{PSim}^+ (A, \sigma)$ the connected component of the identity in 
$\mathbf{PSim} (A,\sigma)$. 
This is a classical absolutely simple adjoint linear algebraic group.
Note that $\mathbf{PSim}^{+}(A,\s)=\mathbf{PSim}(A,\s)$ holds unless $\s$ is orthogonal and $\deg A$ is even.
In any case, $\mathbf{PSim}^+(A,\s)(F)$ has index at most $2$ in $\mathbf{PSim} (A,\s)(F)$ for every field extension $F/K$; see \cite[Prop. 12.23]{KMRT98}.
We obtain a natural group homomorphism 
$$\mu:\Sim(A,\s)\to \mg{K}, x\mapsto \s(x)x.$$
Its image is denoted by $\G(A,\s)$. In other terms,
 $$\G(A,\s)=\{\s(x)x\mid x\in\mg{A}\}\cap\mg{K}.$$
 The elements of this subgroup of $\mg{K}$ are called \emph{similarity factors} or \emph{multipliers of similitude of $(A,\s)$}.
We further set
$$\G^+(A,\s)=\{\mu(a)\mid a\mg{K}\in\mathbf{PSim}^+(A,\s)(K)\}.$$
This is a subgroup of $\G(A,\s)$ of index at most $2$.
We refer to \cite[\S12]{KMRT98} for a discussion of similitudes and their multipliers.

We denote by $\Hyp(A,\s)$ the subgroup of $\mg{K}$ generated by the nonzero elements of $K$ which are norms from some finite field extension $L/K$ such that $(A,\s)_L$ becomes hyperbolic.

\begin{thm}[Merkurjev]
\label{Mer:T1} 
For any field extension $F/K$, $\sq{F}\Hyp(A_{F},\s_{F})$ is a subgroup of $\G^+(A_{F},\s_{F})$, and $\mu$ induces a group isomorphism 
$$\mathbf{PSim}^+(A,\s)(F)/R\stackrel{\simeq}{\longrightarrow} \G^+(A_{F},\s_{F})/\sq{F}\Hyp(A_{F},\s_{F}).$$
\end{thm}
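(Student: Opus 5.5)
The plan is to work with the short exact sequence of algebraic groups
$$1\longrightarrow \mathbb{G}_m\longrightarrow \mathbf{Sim}^+(A,\s)\longrightarrow \mathbf{PSim}^+(A,\s)\longrightarrow 1,$$
where $\mathbf{Sim}^+(A,\s)$ is the preimage of $\mathbf{PSim}^+(A,\s)$. By Hilbert~90 the map $\mathbf{Sim}^+(A,\s)(F)\to\mathbf{PSim}^+(A,\s)(F)$ is surjective. The multiplier $\mu$ is defined on $\mathbf{Sim}^+$, and changing the lift multiplies $\mu$ by a square. Hence $\mu$ induces a surjective homomorphism $\mathbf{PSim}^+(A,\s)(F)\to \G^+(A_F,\s_F)/\sq{F}$. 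It remains to prove three things:
\begin{enumerate}[(i)]
\item the $R$-trivial elements map into $\sq{F}\Hyp(A_F,\s_F)/\sq{F}$;
\item every element of $\Hyp(A_F,\s_F)$ is the multiplier of an $R$-trivial element, which also gives $\sq{F}\Hyp\subseteq\G^+$;
\item an element whose multiplier lies in $\sq{F}\Hyp$ is $R$-trivial.
\end{enumerate}
Without loss of generality $F=K$.

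For (ii), first let $L/K$ be finite with $(A,\s)_L$ hyperbolic, and let $e$ be a hyperbolic idempotent. For $\lambda\in\mg{L}$ put $g_\lambda=e+\lambda(1-e)$. Then $\s(g_\lambda)g_\lambda=((1-e)+\lambda e)(e+\lambda(1-e))=\lambda$. Since $\lambda\mapsto g_\lambda$ is a rational curve through $g_1=1$, each $g_\lambda$ is proper and $R$-trivial over $L$. To bring $N_{L/K}(\lambda)$ down to $K$, I would use a norm principle for the multiplier on $R$-trivial classes. Concretely, I would take the Weil restriction $R_{L/K}$ of this rational family and compose it with a norm (corestriction) map on the commutative part. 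Alternatively, I would use Merkurjev's $K$-theoretic description of $\mathcal{G}(F)/R$ via the functor $F\mapsto \mathcal{G}(F)/R$ having transfers. Both routes should show that $N_{L/K}(\lambda)$ is the multiplier of an $R$-trivial element over $K$. Squares are multipliers of scalars, which are trivial in $\PSim$.

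For (iii), assume $\mu(g)\in\sq{K}\Hyp(A,\s)$. By (ii) I multiply $g$ by $R$-trivial elements and rescale, reducing to $\mu(g)=1$, so that $g$ is a proper isometry, that is, $g\in\mathbf{SO}(A,\s)(K)$ in the orthogonal case, or $g\in\mathbf{Sp}$ in the symplectic case. The Cayley parametrization $x\mapsto (1-x)(1+x)^{-1}$ on skew elements identifies $\mathbf{SO}$ (respectively $\mathbf{Sp}$) birationally with an affine space. Hence these groups are rational, and therefore $R$-trivial. Every such $g$ is then $R$-equivalent to $1$ already in $\mathbf{SO}$, and hence so is its image in $\mathbf{PSim}^+$.

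The main obstacle is (i). Let $h(t)$ be a rational curve in $\mathbf{PSim}^+$ connecting $1$ and $g$. I would lift it, using Hilbert~90 over the local rings of $\mathbb{P}^1_K$, to a rational map $\tilde h$ into $\mathbf{Sim}^+$ that is defined at $0$ and $1$. The multiplier $f=\mu\circ\tilde h$ is then a rational function, and $\mu(g)\equiv f(1)/f(0)\bmod \sq{K}$. By Weil reciprocity, $f(1)/f(0)$ is, up to squares, a product of norms $N_{K(P)/K}$ of values at the closed points $P$ where $\tilde h$ fails to be an invertible similitude. These are the points where $\tilde h$ has a zero or pole of odd order after clearing the scalar denominator. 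The key claim to verify is this: at such a point $P$, the leading coefficient of $\tilde h$ in a uniformizer is a nonzero noninvertible element $x$ with $\s(x)x=0$ whose image and kernel are complementary in the right dimensions. That would force $(A,\s)_{K(P)}$ to be hyperbolic, and it is where properness (connectedness) has to be used. So the norms are in $\Hyp(A,\s)$. I expect the reduction-at-a-place analysis, and in particular controlling the parity of orders so that non-hyperbolic points contribute only squares, to be the delicate step.
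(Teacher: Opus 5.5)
The paper gives no argument for this statement; it simply quotes \cite[Theorem 1]{Mer96}. Your outer reduction is fine: Hilbert~90 for the $\mathbb{G}_m$-extension, rationality of $\mathbf{SO}$ resp.\ $\mathbf{Sp}$ via the Cayley transform, and reducing everything to $\mu\bigl(R\,\mathbf{Sim}^+(A,\s)(K)\bigr)=\sq{K}\Hyp(A,\s)$. However, neither inclusion of this equality is actually proved, and together they are the whole content of the theorem.

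\textbf{Gap in (ii).} The step ``bring $\N_{L/K}(\lambda)$ down to $K$'' is where all the work lies. The elements $g_\lambda$ lie in $\mathbf{Sim}^+(A,\s)(L)$. Weil restriction only rewrites them as $K$-points of $R_{L/K}(\mathbf{Sim}^+(A,\s)_L)$. Since the group is not commutative, there is no norm homomorphism $R_{L/K}(\mathbf{Sim}^+(A,\s)_L)\to\mathbf{Sim}^+(A,\s)$. Only the multiplier torus carries a norm, and that yields no element of $\mathbf{Sim}^+(A,\s)(K)$. Likewise, a transfer on $\mathcal{G}(F)/R$ is not available here without proving exactly this norm principle. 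For $L=K(\sqrt d)$ there is a direct argument: hyperbolicity over $L$ gives a skew-symmetric $y\in A$ with $y^2=d$, and $K(y)^\times$ is a rational torus in $\mathbf{Sim}^+(A,\s)$ on which $\mu$ restricts to $\N_{L/K}$. For arbitrary finite $L$, however, you need a genuine norm principle.

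\textbf{Gap in (i).} Your reduction is correct, and the parity bookkeeping is automatic. Write $f=c\prod_P p_P^{n_P}$ with $p_P$ monic irreducible with root $\theta_P$. Then $f(1)/f(0)=\prod_P\bigl(\N_{K(P)/K}(1-\theta_P)/\N_{K(P)/K}(-\theta_P)\bigr)^{n_P}$, so everything rests on one claim: if $n_P$ is odd, then $\s_{K(P)}$ is hyperbolic.

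The mechanism you propose for this claim is false. Take $V=\hh\perp\hh$ with hyperbolic pairs $(e_1,f_1),(e_2,f_2)$, let $\pi=t-2$, and let $x(t)=\mathrm{diag}(1,\pi^3,\pi,\pi^2)$ in the basis $(e_1,f_1,e_2,f_2)$. This is a proper similitude with multiplier $\pi^3$, invertible at $t=0,1$, so $t\mapsto x(t)x(0)^{-1}$ is a curve of the kind you consider. Yet its leading coefficient at $\pi=0$ has rank $1$ rather than $2$, and no rescaling by powers of $\pi$ changes this.

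What does work is a Springer-type argument. In the completion at $P$, write $f=u\pi^{n_P}$ with $u$ a unit. Then $u\pi\in\G$, so by \Cref{sim-fac-hyp} the hermitian form $\la 1,-u\pi\ra$ over $(A,\s)$ is hyperbolic there. The second residue map for hermitian forms over the unramified algebra with involution $(A,\s)$ then shows that $\la -\bar u\ra$, and hence $\s_{K(P)}$, is hyperbolic; for split $A$ this is exactly Springer's theorem. Properness plays no role in this step; the parity of $n_P$ does.
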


\begin{proof}
    See \cite[Theorem 1]{Mer96}.
\end{proof}

For $a\in\mg{K}$, we write $\lla a\rra$ for the quadratic form $\la 1,-a\ra$, which is the norm form of the quadratic \'etale extension $K[T]/(T^2-a)$ of $K$.

\begin{prop}\label{sim-fac-hyp}
    We have 
    $$\G(A,\s)=\{a\in\mg{K}\mid \Ad\lla a\rra\otimes (A,\s)\text{ is hyperbolic}\}.$$
\end{prop}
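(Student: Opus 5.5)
We prove the two inclusions by identifying $\Ad\lla a\rra\otimes(A,\s)$ with $(M_2(A),\tau)$, where $\tau$ is the involution adjoint to a suitable hermitian form. Concretely, $\lla a\rra=\la 1,-a\ra$, so $\Ad\lla a\rra$ is $M_2(K)$ with the involution $\begin{pmatrix}p&q\\ r&s\end{pmatrix}\mapsto \begin{pmatrix}p&-ar\\ -a^{-1}q&s\end{pmatrix}$. Hence $\Ad\lla a\rra\otimes(A,\s)\simeq (M_2(A),\tau)$ with
\[\tau\begin{pmatrix}x&y\\ z&w\end{pmatrix}=\begin{pmatrix}\s(x)&-a\s(z)\\ -a^{-1}\s(y)&\s(w)\end{pmatrix}.\]
This is the involution adjoint to the diagonal $\s$-hermitian form $\la 1,-a\ra$ over $(A,\s)$. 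The whole proof is a computation in this explicit model.

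For the inclusion $\G(A,\s)\subseteq\{\dots\}$, take $x\in\mg{A}$ with $\s(x)x=a$. We look for an idempotent $e\in M_2(A)$ with $\tau(e)=1-e$, trying $e=\frac12\begin{pmatrix}1&y\\ z&1\end{pmatrix}$. The condition $e^2=e$ forces $yz=zy=1$, and $\tau(e)=1-e$ forces $-a\s(z)=-y$, i.e.\ $y=a\s(z)$. Take $z=x^{-1}$ and $y=x$. Then $yz=1$, and $a\s(z)=a\s(x)^{-1}=x$ because $\s(x)x=a$ gives $\s(x)^{-1}a=x$ (using $a$ central). So $e$ exists and the form is hyperbolic.

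For the converse, the cleanest route is via hermitian forms rather than a bare idempotent computation. Hyperbolicity of $\tau$ means that the hermitian form $h=\la 1,-a\ra$ over $(A,\s)$ is hyperbolic, possibly after scaling, since $\tau$ determines $h$ only up to a factor in $\mg{K}$. Now $h$ is a form of reduced rank $2\deg A$ over $A$, and hyperbolicity means $h\simeq \la 1,-1\ra$, the hyperbolic plane over $(A,\s)$. A scalar multiple of a hyperbolic form is still hyperbolic, so the ambiguity is harmless. We then apply Witt cancellation for hermitian forms over division algebras with involution (valid in characteristic $\neq 2$ after Morita reduction to the division algebra $D$ Brauer equivalent to $A$). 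Cancelling $\la 1\ra$ from $\la 1,-a\ra\simeq\la 1,-1\ra$ yields $\la -a\ra\simeq\la -1\ra$, i.e.\ $\la a\ra\simeq\la 1\ra$ as rank-one hermitian forms over $(A,\s)$. This means exactly that there is $x\in\mg{A}$ with $\s(x)\cdot 1\cdot x=a$, so $a\in\G(A,\s)$.

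The main obstacle is justifying these steps carefully. First, the identification of the tensor product with the adjoint involution of $\la1,-a\ra$ needs checking. Second, the Morita reduction and cancellation are needed because $A$ itself need not be a division algebra: cancellation is formulated for $D$, and we transport the rank-one forms on $A$ to hermitian forms over $D$ of the appropriate rank. If that transport proves awkward, we would instead cite the standard result from \cite[\S12]{KMRT98} that $\G(A,\s)$ consists of the $a$ for which $\s$ and $\mathrm{Int}(\cdot)\circ\s$ give isometric hermitian forms.
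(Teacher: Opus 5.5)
Your proof is correct: the model $(M_2(A),\tau)$ with $\tau=\ad_h$ for $h=\la 1,-a\ra$ is computed correctly, and the idempotent $e=\frac12\begin{pmatrix}1&x\\ x^{-1}&1\end{pmatrix}$ does satisfy $e^2=e$ and $\tau(e)=1-e$. The converse via $h\simeq\la 1,-1\ra$ and hermitian Witt cancellation (transported through Morita equivalence) is sound, and it is essentially the standard argument behind \cite[Prop.~12.20]{KMRT98}, which the paper simply cites; your scaling caveat is unnecessary, since $\tau$ is exactly $\ad_h$.
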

\begin{proof}
    See \cite[Prop.~12.20]{KMRT98}.
\end{proof}

\begin{prop}\label{odddimnonzerodiv}
Let $\psi$ be an odd-dimensional quadratic form over $K$.
Then $(A,\s)$ is hyperbolic if and only if $\Ad(\psi)\otimes(A,\s)$ is hyperbolic. 
Furthermore, $$\G(\Ad(\psi)\otimes (A,\s))=\G(A,\s).$$
\end{prop}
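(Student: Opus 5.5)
We need to prove: $(A,\s)$ is hyperbolic if and only if $\Ad(\psi)\otimes(A,\s)$ is hyperbolic; and the second claim, equality of the groups $\G$, follows from the first by \Cref{sim-fac-hyp}.

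For the easy direction of the first claim, suppose $e\in A$ satisfies $e^2=e$ and $\s(e)=1-e$. Then $1\otimes e$ is an idempotent in $\End(V_\psi)\otimes A$, and $(\ad_\psi\otimes\s)(1\otimes e)=1\otimes(1-e)=1-1\otimes e$. Hence $\Ad(\psi)\otimes(A,\s)$ is hyperbolic.

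For the converse, I would use that hyperbolicity of involutions is detected up to Witt-type cancellation. The cleanest route is via hermitian forms. Write $A=\End_D(V)$ with $D$ a division algebra and $\s$ adjoint to an $\varepsilon$-hermitian form $h$ over $(D,\theta)$. Then $\Ad(\psi)\otimes(A,\s)$ is adjoint to the $\varepsilon$-hermitian form $\psi\otimes h$ over $(D,\theta)$. Here $\psi\otimes h$ means $h$ scaled by the diagonal entries of $\psi$ and summed; this is a standard identification in \cite[\S4]{KMRT98}.

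Recall that $\s$ is hyperbolic if and only if $h$ is hyperbolic, i.e.\ $h$ is zero in the Witt group $W^\varepsilon(D,\theta)$. This group is a module over $W(K)$, and $\psi\otimes h$ is the product $[\psi]\cdot[h]$.

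The task is therefore to show that multiplication by an odd-dimensional form is injective on hermitian Witt classes. This is the main obstacle.

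First I would try to use the Bayer-Fluckiger--Lenstra-type result that $h$ is hyperbolic when it becomes hyperbolic over an odd-degree extension. Equivalently, I would use the known statement \cite[Prop.~6.?]{KMRT98}-style that hyperbolicity is insensitive to tensoring with odd-dimensional forms. If I want to avoid this, the alternative is an induction on $\dim\psi$, after diagonalising $\psi=\la a_1,\dots,a_n\ra$:
\begin{enumerate}[(i)]
\item Pass to the function field $L$ of the quadric of $\la a_1,\dots,a_n\ra\perp\la -1\ra$, over which $\psi$ represents $1$. There $\psi_L\simeq \la 1\ra\perp\psi'$ with $\psi'$ even-dimensional.
\item This direct induction seems messy, so I would instead invoke the fact that $\psi$ odd-dimensional implies $[\psi]$ is a non-zero-divisor on $W^\varepsilon$ modulo $2$-primary considerations. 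That fact is also delicate.
\end{enumerate}

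My committed plan is to reduce to odd-degree descent: $\psi$ becomes isotropic-free to $\la1\ra\perp\hh^k$ over an extension tower of odd-degree extensions together with purely transcendental steps, using Springer-style arguments. That description is unsatisfactory, so I would simply cite the literature, namely \cite[Prop.~2.2]{Bec08} or the analogous result in \cite{KMRT98}. In that case the key input is Springer's theorem for hermitian forms: an odd-dimensional $\psi$ yields, via Scharlau transfer along $K(\sqrt{\ })$-free arguments, $[\psi][h]=0\Rightarrow[h]=0$.

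Finally, the equality of the similarity factor groups follows formally from the first claim. By \Cref{sim-fac-hyp}, $a\in\G(\Ad(\psi)\otimes(A,\s))$ if and only if $\Ad\lla a\rra\otimes\Ad(\psi)\otimes(A,\s)$ is hyperbolic. By commutativity of the tensor product, this algebra with involution is $\Ad(\psi)\otimes\bigl(\Ad\lla a\rra\otimes(A,\s)\bigr)$. Applying the first claim to $\Ad\lla a\rra\otimes(A,\s)$, it is hyperbolic exactly when $\Ad\lla a\rra\otimes(A,\s)$ is hyperbolic, which by \Cref{sim-fac-hyp} again means $a\in\G(A,\s)$.
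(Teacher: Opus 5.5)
The easy direction, your translation into $\varepsilon$-hermitian forms over $(D,\theta)$, and the deduction of $\G(\Ad(\psi)\otimes(A,\s))=\G(A,\s)$ from the first claim via \Cref{sim-fac-hyp} are correct and agree with the paper. The gap is the converse direction: that $[\psi]\cdot[h]=0$ in $W^\varepsilon(D,\theta)$ forces $[h]=0$. You correctly isolate this as the main obstacle but never prove it. Restating it as ``hyperbolicity is insensitive to tensoring with odd-dimensional forms'' is circular. You never connect the Bayer-Fluckiger--Lenstra odd-degree descent to multiplication by $[\psi]$. The citations are guesses, and the closing sentence about Springer's theorem and Scharlau transfer contains no argument.

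Your ``committed plan'' cannot work. Along a tower of odd-degree and purely transcendental extensions, the Witt index of $\psi$ never increases: Springer's theorem covers the odd-degree steps, and anisotropic forms stay anisotropic over rational function fields. So an anisotropic $\psi$ of dimension $3$, for example, never becomes $\la c\ra\perp\hh$ along such a tower, let alone $\la 1\ra\perp\hh$.

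The missing idea is to combine two facts, as the paper does.
\begin{enumerate}[(1)]
\item Scharlau's theorem: the torsion subgroup of the Witt group of $\varepsilon$-hermitian forms over $(D,\theta)$ is $2$-primary. This settles the case $\psi=m\times\la 1\ra$ with $m$ odd, since $m[h]=0$ then forces $[h]=0$.
\item Lewis' theorem: if $\dim(\psi)=2k+1$, then $[\psi]$ is a root in $W(K)$ of $\prod_{i=0}^k(X^2-(2i+1)^2)$. The constant term of this polynomial is $\pm m$ with $m=\prod_{i=0}^k(2i+1)^2$ odd. Hence $m\in[\psi]^2\cdot W(K)\subseteq[\psi]\cdot W(K)$.
\end{enumerate}
Then $[\psi][h]=0$ implies $m[h]=0$, so $[h]=0$ by (1), i.e.\ $(A,\s)$ is hyperbolic.

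Your remark about $[\psi]$ being a non-zero-divisor ``modulo $2$-primary considerations'' points in this direction. However, the $2$-primary torsion statement only becomes usable once you know that some odd integer lies in the ideal of $W(K)$ generated by $[\psi]$. That is exactly what Lewis' theorem supplies.
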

\begin{proof}
If $(A,\s)$ is hyperbolic, then so is $\Ad(\psi)\otimes (A,\s)$.
Assume now that $\Ad(\psi)\otimes (A,\s)$ is hyperbolic.
If $\psi=m\times \la 1\ra$ for an odd number $m\in\nat$, then the desired conclusion that $(A,\s)$ is hyperbolic follows from Scharlau's result \cite[Theorem 5.1]{Scha70} that the torsion part of the Witt group of hermitian forms over $(A,\s)$ is $2$-primary; see also \cite[Theorem 6.5]{BU18}.
In the general case, fix $k\in\nat$ such that $\dim(\psi)=2k+1$.
By Lewis' Theorem \cite[Chap.~VIII, Theorem 8.13]{Lam05}, the class of $\psi$ in the Witt ring of $K$ is a zero of the polynomial $\prod_{i=0}^k(X^2-(2i+1)^2)$. This implies that for $m=\prod_{i=0}^k (2i+1)^2$ the quadratic form $m\times \la 1\ra$ is Witt equivalent to a multiple of $\psi\otimes\psi$, and hence of $\psi$.
Since $\Ad(\psi)\otimes (A,\s)$ is hyperbolic, we therefore obtain that $\Ad(m\times \la 1\ra)\otimes (A,\s)$ is hyperbolic.
Since $m$ is odd, it follows by the first case that $(A,\s)$ is hyperbolic.
    This shows the first part of the statement.

    For $c\in\mg{K}$, we have $$\Ad\lla c\rra\otimes(\Ad(\psi)\otimes (A,\s))\simeq \Ad(\psi)\otimes (\Ad\lla c\rra\otimes (A,\s)),$$ and by the first part, this algebra with involution is hyperbolic if and only if
    $\Ad\lla c\rra\otimes (A,\s)$ is hyperbolic. 
    Now the last part follows by \Cref{sim-fac-hyp}.
\end{proof}

\begin{rem}
    Let $\psi$ be a quadratic form of odd dimension over $K$.
    By \Cref{odddimnonzerodiv} and \Cref{Mer:T1}, it follows that $\mathbf{PSim}^+(\Ad(\psi)\otimes(A,\s))$ is $R$-trivial if and only if $\mathbf{PSim}^+(A,\s)$ is $R$-trivial.
    More generally, applying \cite[Prop.~4.5]{Mer98}, one sees that $\mathbf{PSim}^+(\Ad(\psi)\otimes(A,\s))\times \mathbb{A}^m$ and $\mathbf{PSim}^+(A,\s)\times \mathbb{A}^{m'}$ are birationally isomorphic for certain $m,m'\in\nat$.
\end{rem}

For a quadratic form $\vf$ over $K$ of dimension at least $3$, the function field of the associated projective quadric is denoted by $K(\vf)$. 
We refer to \cite[Section 22]{EKM08} for a discussion of function fields.

\begin{prop}\label{simsplitreduction}
Let $Q$ be a $K$-quaternion algebra, $\nu$ its norm form and set $L=K(\nu)$.
Let $(A,\s)$ be a $K$-algebra with orthogonal involution such that $A$ is Brauer equivalent to $Q$.
Then $\s_L$ is hyperbolic if and only if $\s$ is hyperbolic.
Furthermore, $A_{L}$ is split and $\G(A,\s)=\G((A,\s)_{L})\cap \mg{K}$.
\end{prop}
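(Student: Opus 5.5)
The argument splits into three parts: that $A_L$ is split, the hyperbolicity equivalence, and the equality of multiplier groups, with the last deduced from the second via \Cref{sim-fac-hyp}.

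\emph{Splitting.} Since $\nu$ is the norm form of $Q$, a $2$-fold Pfister form, it becomes isotropic over $L=K(\nu)$. An isotropic Pfister form is hyperbolic, so $Q_L$ is split. As $A$ is Brauer equivalent to $Q$, $A_L$ is split as well.

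\emph{Hyperbolicity.} The implication from $\s$ hyperbolic to $\s_L$ hyperbolic is trivial. For the converse I would invoke the known result on hyperbolicity of orthogonal involutions over function fields of conics (or of the norm quadric) of a quaternion algebra. This is due to Parimala--Sridharan--Suresh and Dejaiffe, with Karpenko's general theorem in the background. It states that if $A$ has index at most $2$ and is split by $L$, then $\s$ is hyperbolic if and only if $\s_L$ is hyperbolic. The case $A$ split is just the quadratic form statement: $\s=\ad_\vf$ with $\vf$ anisotropic stays anisotropic over $K(\nu)$ unless $\vf$ contains a similar copy of a subform of dimension large enough (Cassels--Pfister / subform theorem), and in that case hyperbolicity descends. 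This is standard via \cite[Section 22]{EKM08}. Note that $K(\nu)/K(C)$ is purely transcendental, where $C$ is the conic of $Q$, so $K(\nu)$ and the function field of the conic can be interchanged.

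\emph{Multipliers.} For $a\in\mg{K}$, \Cref{sim-fac-hyp} gives that $a\in\G(A,\s)$ if and only if $\Ad\lla a\rra\otimes(A,\s)$ is hyperbolic. The algebra $\Ad\lla a\rra\otimes A$ is again Brauer equivalent to $Q$ and its involution is orthogonal, so the hyperbolicity equivalence applies to it. Hence $\Ad\lla a\rra\otimes(A,\s)$ is hyperbolic if and only if its extension to $L$ is hyperbolic. Applying \Cref{sim-fac-hyp} over $L$, this happens exactly when $a\in\G((A,\s)_L)$. This gives $\G(A,\s)=\G((A,\s)_L)\cap\mg{K}$.

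\emph{Main obstacle.} The one nontrivial step is the descent of hyperbolicity from $L$ to $K$ when $A$ is not split. I would cite it from the literature rather than reprove it.
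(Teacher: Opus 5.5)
Your proposal is correct and follows the paper's proof. It reduces the hyperbolicity statement to the conic's function field $K(\nu')$, over which $K(\nu)$ is a rational function field, cites Dejaiffe's theorem, and gets the multiplier equality by applying this to $\Ad\lla a\rra\otimes(A,\s)$ together with \Cref{sim-fac-hyp} over $K$ and over $L$. Your aside on the split case is off target and not needed: if $A$ is split then $Q$ is split, so $\nu$ is hyperbolic, $K(\nu)/K$ is purely transcendental and descent is immediate; the subform-theorem remark, read generally, is false, since $\nu_{K(\nu)}$ is hyperbolic while $\nu$ need not be.
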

\begin{proof}
    Clearly, $A_L$ is split.
    We set $L_0=K(\nu')$ where $\nu'$ is the subform satisfying $\nu\simeq \la 1\ra\perp\nu'$.
    Then $L/L_0$ is a rational function field in one variable.
    Hence, the statement on hyperbolicity follows from the corresponding statement with $L_0$ in the place of $L$, which is Dejaiffe's Theorem \cite{Dej01}.
    Applying this now to $\Ad\lla c\rra\otimes (A,\s)$ in the place of $(A,\s)$ for arbitrary $c\in\mg{K}$, we conclude by \Cref{sim-fac-hyp} that $\G(A,\s)=\G((A,\s)_{L})\cap \mg{K}$.
\end{proof}

For a quadratic form $\vf$ over $K$, we will use the abbreviations $\G(\vf), \G^+(\vf)$ and $\Hyp(\vf)$ when referring to the corresponding groups for the adjoint algebra with involution $\Ad(\vf)$.
We observe that 
\[\G(\vf)=\{a\in\mg{K}\mid a\vf\simeq \vf\}=\G^+(\vf).\]

\begin{prop}\label{Jacobson}
    Let $Q$ be a $K$-quaternion algebra, let $\gamma$ be the canonical involution on $Q$ and $\nu$ the norm form of $Q$.
    Then for every $K$-algebra with symplectic involution $(A,\s)$ where $A$ is Brauer equivalent to $Q$, there exists a quadratic form $\vf$ over $K$ such that 
    $$(A,\s)\simeq \Ad(\vf)\otimes (Q,\gamma).$$
    Given such a presentation, $\s$ is isotropic, respectively~hyperbolic, if and only if $\vf\otimes\nu$ has the corresponding property, and 
    furthermore $\G(A,\s)=\G(\vf\otimes\nu)$.
\end{prop}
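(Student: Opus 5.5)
The plan is to reduce the statement to Jacobson's classical correspondence between hermitian forms over $(Q,\gamma)$ and quadratic forms.

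\emph{Existence of the presentation.} Since $A$ is Brauer equivalent to $Q$, we may write $A\simeq M_n(Q)\simeq \End_Q(Q^n)$. Every involution on $A$ of the first kind is adjoint to a form on $Q^n$ that is hermitian or skew-hermitian with respect to $\gamma$. Because $\gamma$ is symplectic and $\s$ is symplectic, this form is hermitian; see \cite[\S4]{KMRT98}. Diagonalise the hermitian form as $h=\la a_1,\dots,a_n\ra$ with $a_i\in\mg{K}$, since hermitian elements of $(Q,\gamma)$ are central. Then $\s=\ad_h$, and $(M_n(Q),\ad_h)\simeq\Ad(\vf)\otimes(Q,\gamma)$ for $\vf=\la a_1,\dots,a_n\ra$. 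This follows from identifying $\End_Q(Q^n)=\End_K(K^n)\otimes Q$, under which $\ad_h$ corresponds to $\ad_\vf\otimes\gamma$.

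\emph{Isotropy and hyperbolicity.} For a given presentation, $\s$ is isotropic, respectively hyperbolic, exactly when the corresponding hermitian form $h=\vf\otimes\la1\ra_Q$ over $(Q,\gamma)$ is isotropic, respectively hyperbolic \cite[\S6]{KMRT98}. Jacobson's theorem states that $h\mapsto q_h$, with $q_h(x)=h(x,x)\in K$, takes the hermitian form $\la a_1,\dots,a_n\ra$ to the quadratic form $\la a_1,\dots,a_n\ra\otimes\nu=\vf\otimes\nu$. It also says this assignment preserves and reflects isometry, isotropy and hyperbolicity. The isotropy part is immediate: $h(x,x)=\sum a_i\,\gamma(x_i)x_i=\sum a_i\nu(x_i)$, so an isotropic vector of $h$ is precisely an isotropic vector of $\vf\otimes\nu$. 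Hyperbolicity then follows by Witt cancellation for hermitian forms over $(Q,\gamma)$ together with the injectivity of $h\mapsto q_h$ on isometry classes. Citing \cite[Chap.~10, Theorem 1.7]{Scha85}, this gives the second claim.

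\emph{Similarity factors.} By \Cref{sim-fac-hyp}, $c\in\G(A,\s)$ if and only if $\Ad\lla c\rra\otimes(A,\s)$ is hyperbolic. We have
\[\Ad\lla c\rra\otimes(A,\s)\simeq \Ad(\lla c\rra\otimes\vf)\otimes(Q,\gamma),\]
which is again of the shape just treated, with the form $\lla c\rra\otimes\vf$. By the previous step it is hyperbolic if and only if $\lla c\rra\otimes\vf\otimes\nu$ is hyperbolic, that is, if and only if $c(\vf\otimes\nu)\simeq\vf\otimes\nu$. This says exactly that $c\in\G(\vf\otimes\nu)$.

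The main obstacle is the hyperbolicity part of Jacobson's correspondence, namely that $\vf\otimes\nu$ hyperbolic implies $h$ hyperbolic. I would handle it by induction on $\dim\vf$. If $\vf\otimes\nu$ is isotropic, then $h$ represents an isotropic vector and so splits off a hyperbolic plane $\hh$. The corresponding quadratic form of $\hh$ is $\la1,-1\ra\otimes\nu$, which is hyperbolic. Witt cancellation for quadratic forms then lets us pass to the complementary summand and apply the induction hypothesis.
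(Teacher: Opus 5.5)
Your proposal is correct and takes the same route as the paper, which attributes the whole statement to Jacobson's theorem (via Scharlau). You unpack the correspondence $h\mapsto q_h=\vf\otimes\nu$ and derive the multiplier claim from \Cref{sim-fac-hyp}, as the paper does in analogous places.

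One small caveat: the hermitian-form steps you use, such as splitting off a hyperbolic plane from an isotropic vector, need $Q$ to be a division algebra, so the split case should be stated separately. That case is immediate: all symplectic involutions on a split algebra of a given degree are isomorphic and hyperbolic, and $\nu$ is hyperbolic.
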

\begin{proof}
    This follows from Jacobson's Theorem \cite{Jac40}; see also \cite[Theorem 10.1.1]{Scha85}.
\end{proof}

\section{Pfister forms}
\label{S:Pfister}

For a quadratic form $\vf$ over $K$, we set $\G(\vf)=\{a\in\mg{K}\mid a\vf\simeq\vf\}$ and observe that $\G(\vf)=\G(\Ad(\vf))$, and we further denote by $\D(\vf)$ the set of elements of $\mg{K}$ which are represented by $\vf$ over $K$.
Recall that $\D(\vf)=\G(\vf)$ holds whenever $\vf$ is a Pfister form; see e.g.~\cite[Chap.~X, Theorem 1.8]{Lam05}.

We call a Pfister form \emph{nontrivial} if it is not isometric to $\la 1\ra$.

\begin{prop}\label{Pfisterform}
    Let $\pi$ be a nontrivial Pfister form over $K$.
    For $c\in\D(\pi)$, there exists $d\in\mg{K}$ such that $c\in\D\lla d\rra$ and $\pi_{K(\sqrt{d})}$ is hyperbolic.
\end{prop}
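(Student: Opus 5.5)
The idea is to use the decomposition of $\pi$ into $\la 1\ra$ and its pure subform. We then choose $d$ so that $\lla d\rra=\la 1,-d\ra$ represents $c$ via a representation $c=t^2-d$, while $-d$ is represented by the pure part of $\pi$. The key tool is the standard fact that an isotropic Pfister form is hyperbolic; see \cite[Chap.~X, Theorem 1.7]{Lam05}.

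Write $\pi\simeq\la 1\ra\perp\pi'$, where $\pi'$ is the pure subform; since $\pi$ is nontrivial, $\dim\pi'\geq 1$. As $c\in\D(\pi)$, we may write $c=t^2+p$ with $t\in K$ and $p\in\D(\pi')\cup\{0\}$.

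First suppose $p=0$, so $c=t^2\in\sq{K}$. Choose any $a\in\mg{K}$ with $\pi\simeq\lla a\rra\otimes\rho$ for some Pfister form $\rho$; such an $a$ exists because $\pi$ is nontrivial. Put $d=a$. Then $c$ is a square and hence lies in $\D\lla d\rra$. Moreover $\lla a\rra$ is isotropic over $K(\sqrt a)$, so $\pi_{K(\sqrt d)}$ is isotropic and therefore hyperbolic.

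Now suppose $p\neq 0$, and set $d=-p\in\mg{K}$. Then $c=t^2-d\cdot 1^2$ is represented by $\la 1,-d\ra=\lla d\rra$, so $c\in\D\lla d\rra$. Over $L=K(\sqrt d)$ the element $d$ becomes a square. Hence $\pi'_L$ represents $p=-d\in -\sq{L}$, and so it represents $-1$. Together with the summand $\la 1\ra$, this makes $\pi_L$ isotropic, and therefore $\pi_L$ is hyperbolic.

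If $d$ happens to be a square in $K$, then $K(\sqrt d)=K$. The same argument shows directly that $\pi$ is isotropic, hence hyperbolic over $K$, so this case needs no separate treatment.

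The only point requiring care is the use of isotropic $\Rightarrow$ hyperbolic for Pfister forms; everything else is immediate from the choice $d=-p$.
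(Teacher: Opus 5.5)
Your proof is correct and is essentially the paper's argument: the paper simply asserts that $c$ is represented by a binary subform $\lla d\rra$ of $\pi$, and then uses that the Pfister form $\pi$, being isotropic over $K(\sqrt{d})$, is hyperbolic there. Your decomposition $c=t^2+p$ with $p\in\D(\pi')\cup\{0\}$ makes explicit the step the paper leaves implicit: you take $d=-p$, so that $\lla d\rra\simeq\la 1,p\ra$ is a subform of $\la 1\ra\perp\pi'$, and you treat the square case $p=0$ separately.
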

\begin{proof}
Let $c\in\D(\pi)$.  
Then $c\in\D\lla d\rra$ for some $d\in\mg{K}$ such that $\lla d\rra$ is a subform of $\pi$.
Then $\pi_{K(\sqrt{d})}$ is isotropic, and since $\pi$ is a Pfister form, it follows that $\pi_{K(\sqrt{d})}$ is hyperbolic. 
\end{proof}

We denote by $\hh_K$ the hyperbolic plane over $K$, and for $d\in\nat$, we write $d\times\hh_K$ for the $d$-fold orthogonal sum $\hh_K\perp\ldots\perp\hh_K$, 
which is up to isometry the unique $2d$-dimensional hyperbolic form over $K$.
Given a quadratic form $\vf$ over $K$, we denote by $\wi(\vf)$ its Witt index. Witt's Cancellation Theorem \cite[Theorem 8.4]{EKM08}
implies the following subform criterion.

\begin{prop}\label{subform}
    Let $\psi$ and $\vf$ be quadratic forms over $K$.
    Then $\psi$ is a subform of $\vf$ if and only if $\wi(\vf\perp-\psi)\geq \dim(\psi)$.
\end{prop}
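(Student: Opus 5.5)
We prove the subform criterion (\Cref{subform}) by comparing both conditions with the Witt decomposition of $\vf\perp-\psi$, using Witt's Cancellation Theorem. We treat the two implications separately.

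\emph{First implication.} Assume that $\psi$ is a subform of $\vf$, say $\vf\simeq\psi\perp\vf'$ for some quadratic form $\vf'$ over $K$.
\begin{itemize}
\item[(1)] Then $\vf\perp-\psi\simeq(\psi\perp-\psi)\perp\vf'$.
\item[(2)] Since $\psi\perp-\psi\simeq\dim(\psi)\times\hh_K$, we get $\vf\perp-\psi\simeq \dim(\psi)\times\hh_K\perp\vf'$.
\item[(3)] Hence $\wi(\vf\perp-\psi)\geq\dim(\psi)$.
\end{itemize}
This direction uses only the standard isometry $\psi\perp-\psi\simeq\dim(\psi)\times\hh_K$.

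\emph{Second implication.} Assume that $\wi(\vf\perp-\psi)\geq\dim(\psi)$, and set $n=\dim(\psi)$.
\begin{itemize}
\item[(1)] By the assumption on the Witt index, $\vf\perp-\psi\simeq n\times\hh_K\perp\rho$ for some quadratic form $\rho$ over $K$. Counting dimensions gives $\dim(\rho)=\dim(\vf)-n$.
\item[(2)] We rewrite the hyperbolic part as $n\times\hh_K\simeq\psi\perp-\psi$. This gives
$$\vf\perp-\psi\simeq \psi\perp\rho\perp-\psi.$$
\item[(3)] We apply Witt's Cancellation Theorem to cancel the common summand $-\psi$. This is legitimate because all forms are regular. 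It yields $\vf\simeq\psi\perp\rho$, so $\psi$ is a subform of $\vf$.
\end{itemize}

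The only step that needs care is (2) of the second implication. We must realise the hyperbolic summand $n\times\hh_K$ of $\vf\perp-\psi$ as $\psi\perp-\psi$. This is exactly the isometry used in the first implication, now applied in the reverse direction. After that, cancellation finishes the argument.

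Since our quadratic forms are regular by convention and the characteristic is not $2$, I do not expect a genuine obstacle. The statement is essentially a reformulation of Witt cancellation.
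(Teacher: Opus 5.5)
Your proof is correct and follows essentially the same route as the paper: both rest on the isometry $\psi\perp-\psi\simeq\dim(\psi)\times\hh_K$ together with Witt's Cancellation Theorem. The paper only packages your two implications into one equivalence, namely that $\vf\simeq\psi\perp\rho$ holds if and only if $\vf\perp-\psi\simeq\rho\perp\dim(\psi)\times\hh_K$, for an arbitrary quadratic form $\rho$.
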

\begin{proof}
    Let $d=\dim(\psi)$. Note that $\psi\perp-\psi\simeq d\times\hh_K$.
    In view of Witt's Cancellation Theorem \cite[Theorem 8.4]{EKM08}, we obtain for an arbitrary quadratic form $\rho$ over $K$ that 
    $\vf\simeq \psi\perp\rho$ holds if and only if $\vf\perp-\psi\simeq \rho\perp d\times\hh_K$.
    The existence of such a quadratic form $\rho$ is therefore equivalent to having that $\wi(\vf\perp-\psi)\geq d$.
\end{proof}

We formulate a variant of the so-called Subform Theorem \cite[Theorem 22.5]{EKM08}.

\begin{prop}\label{Pfister-subform}
    Let $\vf$ and $\psi$ be Pfister forms with $\dim(\vf)>\dim(\psi)\geq 2$.
    Then $\vf_{K(\psi)}$ is hyperbolic if and only if $\psi$ is a subform of $\vf$.
\end{prop}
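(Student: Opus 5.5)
We derive both directions from the Subform Theorem \cite[Theorem 22.5]{EKM08}. In its standard form it says: if $\vf$ is anisotropic and $\psi$ is any quadratic form of dimension at least $2$ such that $\vf_{K(\psi)}$ is hyperbolic, then $a\psi$ is a subform of $\vf$ for every $a\in\D(\psi)\D(\vf)$. We also use the basic fact that a Pfister form is either anisotropic or hyperbolic. When $\dim(\psi)=2$, the function field $K(\psi)$ is not defined in the sense above. In that case we read $K(\psi)$ as the field over which $\psi$ becomes isotropic: $K(\sqrt{d})$ if $\psi=\lla d\rra$ with $d\notin\sq{K}$, and $K$ itself if $\psi$ is hyperbolic.

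For the implication ``$\Leftarrow$'', assume $\psi$ is a subform of $\vf$. Over $K(\psi)$ the Pfister form $\psi$ is isotropic, hence hyperbolic. Write $\vf\simeq\psi\perp\rho$. Then $\vf_{K(\psi)}$ contains the hyperbolic subform $\psi_{K(\psi)}$, so it is isotropic. Since $\vf_{K(\psi)}$ is again a Pfister form, it is hyperbolic.

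For the implication ``$\Rightarrow$'', assume $\vf_{K(\psi)}$ is hyperbolic. We split into two cases.

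\emph{Case 1: $\vf$ is anisotropic.} The Subform Theorem applies. We take $a=1$, which lies in $\D(\psi)\D(\vf)$ because both Pfister forms represent $1$. This gives that $\psi$ is a subform of $\vf$.

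\emph{Case 2: $\vf$ is isotropic.} Then $\vf$ is hyperbolic, so $\vf\simeq \frac{\dim\vf}{2}\times\hh_K$. By \Cref{subform}, it suffices to show
\[
\wi(\vf\perp-\psi)\geq\dim(\psi).
\]
Write $\vf\perp-\psi\simeq \vf\perp\psi\perp-\psi$ modulo the hyperbolic part coming from $\vf$. Since $\psi\perp-\psi$ is hyperbolic, the form $\vf\perp-\psi$ contains $\frac{\dim\vf}{2}\times\hh_K$ together with the anisotropic part of $-\psi$. More concretely, $\vf\perp-\psi$ is Witt equivalent to $-\psi$, and its dimension is $\dim\vf+\dim\psi$. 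Hence
\[
\wi(\vf\perp-\psi)\geq \frac{\dim\vf+\dim\psi-\dim\psi}{2}=\frac{\dim\vf}{2}.
\]
Because $\vf$ and $\psi$ are Pfister forms, their dimensions are powers of $2$. So $\dim\vf>\dim\psi$ forces $\dim\vf\geq 2\dim\psi$, and therefore $\frac{\dim\vf}{2}\geq\dim\psi$. This is the required bound.

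The only real obstacle is the hypothesis of the Subform Theorem, which requires $\vf$ to be anisotropic; Case 2 handles the remaining situation. The case $\dim\psi=2$ should be checked directly, since the function field of a conic-free binary form is not standard. There, $\vf$ hyperbolic over $K(\sqrt d)$ means $\vf\simeq\lla d\rra\otimes\tau$ for an anisotropic $\vf$ \cite[Chap.~X]{Lam05}, so $\lla d\rra$ is a subform of $\vf$ directly. This treatment of $\dim\psi=2$ is the step I would verify most carefully.
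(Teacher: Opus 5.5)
Your proof is correct and is essentially the paper's argument: the Subform Theorem with $a=1$ when $\vf$ is anisotropic, the observation that a hyperbolic $\vf$ with $\dim(\vf)\geq 2\dim(\psi)$ contains $\psi$ when $\vf$ is isotropic, and ``isotropic implies hyperbolic'' for Pfister forms in the converse. There is one small slip in your optional remark on $\dim(\psi)=2$: the decomposition $\vf\simeq\lla d\rra\otimes\tau$ only shows that $t\lla d\rra$ is a subform of $\vf$ for some $t\in\D(\vf)$, and you need $\D(\vf)=\G(\vf)$ (since $\vf$ is a Pfister form) to rescale this to $\lla d\rra$.
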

\begin{proof}
    If $\vf$ is anisotropic and $\vf_{K(\psi)}$ is hyperbolic, then it follows by \cite[Theorem 22.5]{EKM08} that $\psi$ is a subform of $\vf$.
    If $\vf$ is isotropic, then it is hyperbolic, and since $\dim(\vf)\geq 2\dim(\psi)$, it follows that $\psi$ is a subform of $\vf$.
    Conversely, if $\psi$ is a subform of $\vf$, then $\vf_{K(\psi)}$ is isotropic and hence hyperbolic.
\end{proof}

\begin{prop}\label{Pfister-multiple}
    Let $\alpha$ be an anisotropic Pfister form and $\varphi$ a quadratic form over $K$.
    Then there exists a quadratic form $\vf'$ such that 
    $\alpha\otimes \vf'$ is anisotropic and Witt equivalent to  $\alpha\otimes \vf$ and $\dim(\vf')\equiv \dim(\vf)\,\bmod 2$.
    In particular, if $\dim(\vf)$ is even, then $\wi(\alpha\otimes \vf)$ is a multiple of $\dim(\alpha)$.
\end{prop}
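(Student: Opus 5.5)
Writing $\tau$ for the anisotropic part of $\alpha\otimes\vf$, the plan is to show that $\tau\simeq\alpha\otimes\vf'$ for a suitable $\vf'$. This gives both claims at once: $\dim(\vf')$ has the right parity, and the Witt index of $\alpha\otimes\vf$ is $\tfrac12(\dim(\alpha)\dim(\vf)-\dim(\alpha)\dim(\vf'))$. I will argue by induction on $\dim(\vf)$, and the key input is the classical fact (\cite[Chap.~X]{Lam05}, \cite[\S 6]{EKM08}) that for an anisotropic Pfister form $\alpha$ and $c\in\mg{K}$ one has $\D(\alpha)=\G(\alpha)$.

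The core step is the following. Suppose $\alpha\otimes\vf$ is isotropic with $\vf=\la a_1,\dots,a_n\ra$. Then there are vectors $x_i$ with $\sum a_i\alpha(x_i)=0$, not all zero. Since $\alpha$ is anisotropic, the nonzero $\alpha(x_i)$ lie in $\D(\alpha)=\G(\alpha)$. Let $I$ be the set of indices with $x_i\neq 0$. Then $a_i\alpha\simeq a_i\alpha(x_i)\alpha$, so $\alpha\otimes\vf\simeq\alpha\otimes\vf_1$, where $\vf_1$ is obtained by replacing $a_i$ with $b_i=a_i\alpha(x_i)$ for $i\in I$. The form $\la b_i\ra_{i\in I}$ is isotropic, because $\sum_{i\in I}b_i=0$. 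Hence $\vf_1\simeq\hh_K\perp\vf_2$ with $\dim(\vf_2)=\dim(\vf)-2$. It follows that $\alpha\otimes\vf\simeq\dim(\alpha)\times\hh_K\perp\alpha\otimes\vf_2$, using that $\alpha\otimes\hh_K$ is hyperbolic.

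Iterating this reduction, I remove hyperbolic planes from $\vf$ two dimensions at a time until $\alpha\otimes\vf'$ is anisotropic. Each step keeps $\dim(\vf')\equiv\dim(\vf)\bmod 2$ and keeps $\alpha\otimes\vf'$ Witt equivalent to $\alpha\otimes\vf$. The process terminates because the dimension drops at each step. This proves the first claim.

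For the \emph{in particular} part, note that $\alpha\otimes\vf\simeq m\dim(\alpha)/2\times\hh_K\perp\alpha\otimes\vf'$ with $m=\dim(\vf)-\dim(\vf')$. The second summand is anisotropic, so the Witt index equals $\tfrac m2\dim(\alpha)$. When $\dim(\vf)$ is even, $m$ is even and $\dim(\vf')$ is even as well.

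The delicate point is the isotropy step, specifically justifying that $\la b_i\ra_{i\in I}$ is isotropic (or, when $|I|=1$, ruling that case out). If $|I|=1$, then $a_i\alpha(x_i)=0$, which is impossible. So $|I|\geq2$, and $\sum_{i\in I} b_i\cdot 1^2=0$ is a genuine isotropic vector of $\la b_i\ra_{i\in I}$.
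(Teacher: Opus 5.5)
Your proof is correct, but it takes a genuinely different route from the paper's.

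\textbf{The paper's route.} The paper works over the function field $K(\alpha)$. There $\alpha\otimes\vf$, and hence its anisotropic part $\vartheta$, becomes hyperbolic. It then invokes the divisibility theorem \cite[Chap.~X, Theorem 4.11]{Lam05}: an anisotropic form that becomes hyperbolic over $K(\alpha)$ is a multiple of $\alpha$. This gives $\vartheta\simeq\alpha\otimes\vf'$. The parity of $\dim(\vf')$ is proved separately: $(\vf\perp-\vf')\otimes\alpha$ is hyperbolic while $\alpha$ is not, and odd-dimensional forms are not zero divisors in the Witt ring (\cite[Chap.~VIII, Cor.~8.5]{Lam05} or \Cref{odddimnonzerodiv}).

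\textbf{Your route.} You give the classical elementary argument. An isotropic vector of $\perp_i a_i\alpha$, combined with $\D(\alpha)=\G(\alpha)$, lets you rescale the diagonal entries of $\vf$ so that $\vf$ itself becomes isotropic. You then split off $\alpha\otimes\hh_K\simeq\dim(\alpha)\times\hh_K$ and iterate.

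\textbf{What each buys.} Your argument is self-contained: it uses only the multiplicativity of Pfister forms and Witt decomposition. The parity comes for free, since you only ever remove two dimensions from $\vf$ at a time. The paper's argument is shorter on the page given the cited theorems, and it matches the function-field techniques used immediately afterwards in \Cref{L:Pfister-fufi-G}.

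Both arguments in fact give $\wi(\alpha\otimes\vf)=\tfrac12\dim(\alpha)\bigl(\dim(\vf)-\dim(\vf')\bigr)$ with $\dim(\vf)-\dim(\vf')$ even. So the Witt index is a multiple of $\dim(\alpha)$ even without assuming $\dim(\vf)$ is even.
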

\begin{proof}
    If $\alpha\simeq \la 1\ra$ the statement is trivial, so we may assume that $\alpha$ is a nontrivial Pfister form and consider its function field $K(\alpha)$.
    Then $\alpha_{K(\alpha)}$ is hyperbolic.
    Let $\vartheta$ be the anisotropic part of $\alpha\otimes\vf$. It follows  by \cite[Chap.~X, Theorem 4.11]{Lam05} that $\vartheta\simeq \alpha\otimes\vf'$ for some quadratic form $\vf'$. 
    Then $(\varphi\perp-\varphi')\otimes \alpha$ is hyperbolic. 
    As $\alpha$ is not hyperbolic, it follows by \cite[Chap.~VIII, Cor.~8.5]{Lam05} (or by \Cref{odddimnonzerodiv}) that $\dim(\varphi\perp-\varphi')$ is even, whereby $\dim (\vf)\equiv \dim (\vf')\bmod 2$.
\end{proof}

We now apply the above subform criteria to compute the similarity factors of a Pfister form after base change to the function field of a related Pfister form.

\begin{lem}\label{L:Pfister-fufi-G}
    Let $\alpha$ and $\rho$ be Pfister forms over $K$ such that $\rho$ is nontrivial. Let $b\in\mg{K}$. 
    Set $\psi=\alpha\otimes \lla b\rra$, $\pi=\alpha\otimes\rho$ and $\wt{\pi}=\alpha\otimes (\la b\ra\perp\rho')$.
    Then \[\G(\pi_{K(\psi)})\cap\mg{K}=\D(\pi)\cdot\D(\wt{\pi}).\]
\end{lem}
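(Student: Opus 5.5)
The inclusion ``$\supseteq$'' will come from comparing $\pi$ and $\wt{\pi}$ over $K(\psi)$. The inclusion ``$\subseteq$'' will come from \Cref{Pfister-subform} combined with the subform criterion \Cref{subform}, which should produce an isotropy statement for $\wt{\pi}\perp -c\pi$. Throughout, $\rho'$ denotes the pure subform of $\rho$, so $\rho\simeq\la 1\ra\perp\rho'$. Write $\dim\alpha=2^n$ and $\dim\rho=2^m$ with $m\geq 1$.

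\emph{Inclusion $\supseteq$.} Over $L=K(\psi)$ the Pfister form $\psi_L=(\alpha\otimes\lla b\rra)_L$ is hyperbolic, because $\psi$ is isotropic over its own function field. Since $\psi\simeq\alpha\perp -b\alpha$, Witt cancellation gives $b\alpha_L\simeq \alpha_L$. Hence
\[\wt{\pi}_L\simeq b\alpha_L\perp(\alpha\otimes\rho')_L\simeq \alpha_L\perp(\alpha\otimes\rho')_L\simeq\pi_L.\]
Therefore $\D(\pi)\cdot\D(\wt{\pi})\subseteq \D(\pi_L)\D(\pi_L)$. Since $\pi_L$ is a Pfister form, $\D(\pi_L)=\G(\pi_L)$ is a group, so this product lies in $\G(\pi_L)\cap\mg{K}$.

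\emph{Inclusion $\subseteq$.} Let $c\in\mg{K}$ with $c\pi_L\simeq\pi_L$. Then the Pfister form $\pi\otimes\lla c\rra$ is hyperbolic over $L=K(\psi)$. Its dimension is $2^{n+m+1}>2^{n+1}=\dim\psi\geq 2$, so \Cref{Pfister-subform} shows that $\psi$ is a subform of $\pi\otimes\lla c\rra\simeq\pi\perp -c\pi$. By \Cref{subform}, this means
\[\wi(\pi\perp -c\pi\perp -\psi)\geq 2^{n+1}.\]
Next we rewrite this form up to Witt equivalence. Since $\rho'\perp-\rho'$ is hyperbolic, $\pi\perp-\wt{\pi}$ is Witt equivalent to $\alpha\otimes\la 1,-b\ra=\psi$. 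Hence $\pi\perp-c\pi\perp-\psi$ is Witt equivalent to $\wt{\pi}\perp -c\pi$.

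We now count dimensions. The anisotropic part of $\pi\perp-c\pi\perp-\psi$ has dimension at most
\[2^{n+m+1}+2^{n+1}-2\cdot 2^{n+1}=2^{n+m+1}-2^{n+1}.\]
This is strictly smaller than $\dim(\wt{\pi}\perp-c\pi)=2^{n+m+1}$. Since the two forms are Witt equivalent, they have the same anisotropic part, so $\wt{\pi}\perp -c\pi$ is isotropic.

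From this isotropy we conclude as follows. There are vectors $y,z$, not both zero, with $\wt{\pi}(y)=c\,\pi(z)$. If both values are nonzero, then $c=\wt{\pi}(y)\pi(z)^{-1}$. As $\D(\pi)$ is a group, this gives $c\in\D(\wt{\pi})\D(\pi)$. Otherwise one of $\pi$ or $\wt{\pi}$ is isotropic. An isotropic regular form is universal, so either $\D(\pi)=\mg{K}$ or $\D(\wt{\pi})=\mg{K}$. Since $1\in\D(\pi)$, and $\D(\wt{\pi})\ni b$ is nonempty, in both cases the product $\D(\pi)\D(\wt{\pi})$ is all of $\mg{K}$.

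The main point requiring care is the dimension and Witt-index bookkeeping that converts the subform statement into isotropy of $\wt{\pi}\perp-c\pi$. The hypothesis that $\rho$ is nontrivial is used exactly there, to get $\dim(\pi\otimes\lla c\rra)>\dim\psi$ so that \Cref{Pfister-subform} applies.
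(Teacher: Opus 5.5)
Your proof is correct, and it differs from the paper's mainly in the inclusion $\supseteq$.

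The paper first reduces to the case where $\pi$ is anisotropic. It then proves a single chain of equivalences. The element $c$ lies in $\G(\pi_{K(\psi)})$ iff $\psi$ is a subform of $\lla c\rra\otimes\pi$. By Witt cancellation, this holds iff $-b\alpha$ is a subform of $(\alpha\otimes\rho')\perp -c\pi$. This in turn holds iff $\wt{\pi}\perp-c\pi$ is isotropic. The last equivalence relies on \Cref{Pfister-multiple}: the Witt index of $\wt{\pi}\perp-c\pi=\alpha\otimes(\la b\ra\perp\rho'\perp-c\rho)$ is a multiple of $\dim\alpha$, which upgrades mere isotropy to Witt index at least $\dim\alpha$.

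Your inclusion $\subseteq$ follows the same opening steps as the paper. You then replace cancellation by a Witt-class computation and a dimension count. That count only needs the easy direction, that large Witt index forces isotropy.

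Your inclusion $\supseteq$ is genuinely different and more elementary. You observe that $b\alpha$ and $\alpha$, hence $\wt{\pi}$ and $\pi$, become isometric over $K(\psi)$, which makes this direction immediate. As a result you need neither \Cref{Pfister-multiple} nor the preliminary reduction to anisotropic $\pi$; your closing universality argument handles isotropic $\pi$ or $\wt{\pi}$.

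In return, the paper's chain gives in one stroke the explicit criterion that $c\in\G(\pi_{K(\psi)})$ iff $\wt{\pi}\perp-c\pi$ is isotropic. Your two inclusions recover this as well, since isotropy implies $c\in\D(\pi)\cdot\D(\wt{\pi})$.
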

\begin{proof}
    If $\pi$ is hyperbolic, then $\G(\pi_{K(\psi)})\cap\mg{K}=\mg{K}=\D(\pi)=\D(\pi)\cdot\D(\wt{\pi})$.
    We may thus assume that $\pi$ is anisotropic. In particular $\alpha$ is anisotropic.
    
    Consider $c\in\mg{K}$.
    We have $c\in\G(\pi_{K(\psi)})$ if and only if $(\lla c\rra\otimes\pi)_{K(\psi)}$ is hyperbolic. 
    In view of \Cref{Pfister-subform}, this is if and only if $\psi$ is a subform of $\lla c\rra\otimes\pi$.
    Note that $\psi=\alpha \perp -b\alpha$ and $\lla c\rra\otimes \pi =\alpha\perp (\alpha\otimes \rho') \perp - c\pi$.
    By Witt's Cancellation Theorem \cite[Theorem 8.4]{EKM08}, $\psi$ is a subform of $\lla c\rra\otimes\pi$ if and only if $-b\alpha$ is a subform of $(\alpha\otimes \rho') \perp - c\pi$. 
    Now observe that
    $$b\alpha\perp (\alpha\otimes \rho')\perp -c\pi\simeq \alpha\otimes (\la b\ra\perp\rho'\perp -c\rho) \simeq \wt{\pi}\perp-c\pi.$$     
    It follows by \Cref{Pfister-multiple} that $\wi(\wt{\pi}\perp-c\pi)$ is a multiple of $\dim(\alpha)$.
    Therefore $\wt{\pi}\perp-c\pi$ is isotropic if and only if $\wi(b\alpha\perp (\alpha\otimes \rho')\perp -c\pi)\geq \dim(b\alpha)$, and by \Cref{subform} this is if and only if $-b\alpha$ is a subform of $(\alpha\otimes \rho')\perp -c\pi$.
    
    Hence, we have shown that $c\in\G(\pi_{K(\psi)})$ if and only if $\wt{\pi}\perp-c\pi$ is isotropic. The latter is equivalent to having that $c\in\D(\pi)\cdot \D(\wt{\pi})$. 
\end{proof}

For an orthogonal involution $\tau$ on a central simple $K$-algebra, we denote by $\disc(\tau)$ its discriminant, viewed as a class in $\scg{K}$.

\begin{lem}\label{ad}
Let $a,b\in\mg{K}$, $Q=(a,b)_K$ and $\tau$ an orthogonal involution on $Q$ with $\disc(\tau)=a\sq{K}$.
Let $\rho$ be a Pfister form over $K$, $\psi$ an odd-dimensional quadratic form over $K$ and $$(A,\s)=\Ad(\psi\otimes\rho)\otimes (Q,\tau).$$
Let $\pi=\lla a\rra\otimes \rho$ and $\wt{\pi}=\lla a\rra\otimes (\la b\ra\perp\rho')$.
Then $$\G(A,\s)=\D(\pi)\cdot \D(\wt{\pi}).$$
    Furthermore, for any field extension $K'/K$, the form $\wt{\pi}_{K'}$ is isotropic if and only if $\s_{K'}$ is hyperbolic.
    \end{lem}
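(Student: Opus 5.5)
The plan is to reduce to the split case by passing to the function field of the norm form of $Q$, and then to apply \Cref{L:Pfister-fufi-G} with $\alpha=\lla a\rra$.

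Let $\nu=\lla a,b\rra$ be the norm form of $Q$ and set $L=K(\nu)$. Then $A$ is Brauer equivalent to $Q$, so $A_L$ is split. Over $L$ the algebra with orthogonal involution $(Q,\tau)_L$ is split of degree $2$ with discriminant $a\sq{L}$. Since a binary form $\la 1,-a\ra$ has $\disc\Ad\lla a\rra=a\sq{L}$, and split orthogonal involutions of degree $2$ are classified by their discriminant, we get $(Q,\tau)_L\simeq\Ad\lla a\rra_L$. Hence
$$(A,\s)_L\simeq \Ad(\psi\otimes\rho\otimes\lla a\rra)_L=\Ad(\psi\otimes\pi)_L .$$
By \Cref{odddimnonzerodiv} (applied over $L$, with $\Ad(\pi)$ in the role of $(A,\s)$), $\G((A,\s)_L)=\G(\pi_L)$. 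By \Cref{simsplitreduction}, $\G(A,\s)=\G(\pi_L)\cap\mg{K}$. Since $\nu=\lla a\rra\otimes\lla b\rra$, this is exactly the setting of \Cref{L:Pfister-fufi-G} with $\alpha=\lla a\rra$ and $\psi=\nu$, so $K(\psi)=L$. That lemma yields $\G(\pi_L)\cap\mg{K}=\D(\pi)\cdot\D(\wt\pi)$, provided $\rho$ is nontrivial.

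For the second statement, fix $K'/K$ and set $L'=K'(\nu)$. By \Cref{simsplitreduction} over $K'$, $\s_{K'}$ is hyperbolic if and only if $\s_{L'}$ is. By \Cref{odddimnonzerodiv} and the computation above, this holds if and only if $\pi_{L'}$ is hyperbolic. When $\rho$ is nontrivial, $\dim\pi>\dim\nu$, so by \Cref{Pfister-subform} this is equivalent to $\nu_{K'}$ being a subform of $\pi_{K'}$. We then repeat the cancellation argument from the proof of \Cref{L:Pfister-fufi-G}, in effect with $c$ replaced by nothing. Write $\nu=\lla a\rra\perp -b\lla a\rra$ and $\pi=\lla a\rra\perp\lla a\rra\otimes\rho'$. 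By Witt cancellation the subform condition becomes: $-b\lla a\rra$ is a subform of $\lla a\rra\otimes\rho'$. By \Cref{subform} this means $\wi(\wt\pi_{K'})\geq 2$. If $\lla a\rra_{K'}$ is anisotropic, \Cref{Pfister-multiple} (with $\dim(\la b\ra\perp\rho')$ even) shows that $\wi(\wt\pi_{K'})$ is a multiple of $2$, so this is equivalent to $\wt\pi_{K'}$ being isotropic. If $a\in\sq{K'}$, then both sides hold trivially: $\wt\pi_{K'}$ is hyperbolic, and $\s_{K'}$ is hyperbolic because then $(Q,\tau)_{K'}$ is split with trivial discriminant, hence hyperbolic.

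The main obstacle is the degenerate case $\rho\simeq\la1\ra$, where $\rho'=0$, $\pi=\lla a\rra$ and $\wt\pi=b\lla a\rra$. Here neither \Cref{L:Pfister-fufi-G} nor \Cref{Pfister-subform} applies, since the dimensions are not strictly decreasing. I would treat this case by hand. We have $c\in\G(\pi_L)$ if and only if $\lla a,c\rra_L$ is hyperbolic. For $2$-fold Pfister forms of equal dimension, this holds if and only if $\lla a,c\rra\simeq\nu$ or $\lla a,c\rra$ is hyperbolic over $K$. In terms of represented elements, this means $c\in\D\lla a\rra\cup b\D\lla a\rra$, and the stated formula for $\G(A,\s)$ has to be read accordingly in this case. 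The hyperbolicity criterion reduces to: $\lla a\rra_{K'(\nu)}$ is hyperbolic if and only if $\wt\pi_{K'}=b\lla a\rra_{K'}$ is isotropic, which holds exactly when $a\in\sq{K'}$. I would check this edge case carefully, since the conventions there are the likeliest source of error.
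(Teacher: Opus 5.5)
Your proof of the first assertion is the paper's (pass to $L=K(\nu)$, then apply \Cref{simsplitreduction}, \Cref{odddimnonzerodiv} and \Cref{L:Pfister-fufi-G} with $\alpha=\lla a\rra$). For the second, you run a single chain of equivalences through \Cref{Pfister-subform}, Witt cancellation and \Cref{Pfister-multiple}, whereas the paper uses $\wt\pi_{K'(\nu)}\simeq\pi_{K'(\nu)}$ for one direction.

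The gap is your claim that $\dim\pi>\dim\nu$ as soon as $\rho$ is nontrivial. For a $1$-fold $\rho=\lla r\rra$ one has $\dim\pi=4=\dim\nu$, and then ``$\pi_{L'}$ hyperbolic $\Rightarrow$ $\nu_{K'}$ is a subform of $\pi_{K'}$'' fails whenever $\pi_{K'}$ is hyperbolic and $\nu_{K'}$ is not. This cannot be patched, because the implication ``$\s_{K'}$ hyperbolic $\Rightarrow$ $\wt\pi_{K'}$ isotropic'' is then false. Take $K'=K=\mathbb{Q}$, $a=b=-1$, $\psi=\la 1\ra$ and $\rho=\lla 2\rra$. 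Then $\pi\simeq\la 1,1,-2,-2\ra$ is hyperbolic because $2\in\D\la 1,1\ra$. So $(A,\s)_L\simeq\Ad(\pi_L)$ is hyperbolic, and hence $\s$ is hyperbolic by \Cref{simsplitreduction}, while $\wt\pi\simeq\la -1,-1,-2,-2\ra$ is anisotropic.

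The paper's proof has the same oversight: it applies \Cref{Pfister-subform} to $\pi$ and $\nu$ without checking dimensions. This is harmless for \Cref{T:main}, which only uses ``$\wt\pi_{K'}$ isotropic $\Rightarrow$ $\s_{K'}$ hyperbolic''. Your chain does prove that direction for every nontrivial $\rho$, since it only needs the easy half of \Cref{Pfister-subform}. The paper's observation $\wt\pi_{K'(\nu)}\simeq\pi_{K'(\nu)}$ (as $\wt\pi$ is Witt equivalent to $\pi\perp-\nu$) gives it in one line for all $\rho$, with no need for \Cref{Pfister-multiple} or a separate case $a\in\sq{K'}$.

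Your computation for $\rho\simeq\la 1\ra$ is correct, but you should state its consequence plainly rather than saying the formula ``has to be read accordingly''. Since $\D(\pi)\cdot\D(\wt\pi)=\D\lla a\rra\cdot b\D\lla a\rra=b\D\lla a\rra$, which contains $1\in\G(A,\s)$ only if $b\in\D\lla a\rra$, the first assertion is false whenever $Q$ is nonsplit. For instance, take $K=\mathbb{R}$, $a=b=-1$, $\psi=\rho=\la 1\ra$: then $\G(A,\s)=\mg{\mathbb{R}}$, while $\D(\pi)\cdot\D(\wt\pi)$ is the set of negative reals.

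So the lemma needs two extra hypotheses. The first assertion needs $\rho$ nontrivial, which is what \Cref{L:Pfister-fufi-G} requires and what holds in \Cref{T:main} because $4\mid\deg A$ there. The full equivalence in the second assertion needs $\dim\rho\neq 2$. The paper's proof tacitly assumes the former as well.
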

\begin{proof}
Let $\nu= \lla a,b\rra$. Set $L=K(\nu)$.
Then $Q_{L}$ and $A_{L}$ are split, and 
\[(A,\s)_{L}\simeq \Ad(\psi_L)\otimes\Ad(\rho_{L})\otimes\Ad(\lla a\rra_{L})\simeq \Ad((\psi\otimes\pi)_{L}).\]

By \Cref{simsplitreduction} and \Cref{odddimnonzerodiv}, we have
\[\G(A,\s)=\G((\psi\otimes \pi)_L)\cap \mg{K}\quad\text{ and 
}\quad\G((\psi\otimes\pi)_{L})=\G(\pi_{L}).\]
Using \Cref{L:Pfister-fufi-G}, we therefore conclude that 
\[\G(A,\s)=\G(\pi_{L})\cap\mg{K}=\D(\pi)\cdot \D(\wt{\pi}).\]

Consider now a field extension $K'/K$.
Note that 
\[\wt{\pi}_{K'(\nu)}\simeq \pi_{K'(\nu)}\qquad\text{ and }\qquad(A,\s)_{K'(\nu)}\simeq \Ad((\psi\otimes\pi)_{K'(\nu)}).\]
Assume first that $\wt{\pi}_{K'}$ is isotropic.
Then $\pi_{K'(\nu)}$ is isotropic and thus hyperbolic, because it is a Pfister form. 
It follows that $\s_{K'(\nu)}$ is hyperbolic, and hence $\s_{K'}$ is hyperbolic, by \Cref{simsplitreduction}.
Assume conversely that $\s_{K'}$ is hyperbolic.
Then $\pi_{K'(\nu)}$ is hyperbolic, and it follows by \Cref{Pfister-subform} that $\nu_{K'}$ is a subform of $\pi_{K'}$.
Since $\wt{\pi}$ is Witt equivalent to $\pi\perp-\nu$ and $\dim(\wt{\pi})=\dim(\pi)$, we conclude that $\wt{\pi}_{K'}$ is isotropic.
\end{proof}

\section{\texorpdfstring{$R$-triviality in index at most $2$}{R-triviality in index at most 2}}
\label{S:main-result}

When $(A,\sigma) \simeq \Ad(\psi \otimes \pi)$ for a Pfister form $\pi$ and an odd-dimensional quadratic form $\psi$ over $K$, then we have $\G(A,\sigma) = \G(\pi) = \Hyp(\pi) = \Hyp(A,\sigma)$. In this case, \Cref{Pfisterform} yields for any given element $a \in \G(A,\sigma)$ a single quadratic extension $L/K$ such that $\sigma_L$ is hyperbolic and $a \in \N_{L/K}(\mg{L})$.

The goal of this section is to extend this phenomenon beyond the split case. 
Specifically, we show that $\G(A,\sigma) = \Hyp(A,\sigma)$ holds for all totally decomposable $K$-algebras with involution of index at most $2$. The key strategy is to use an auxiliary Pfister form $\pi$ associated with $(A,\sigma)$ to factor any multiplier of similitude into a product of two elements, each occurring as a norm from a quadratic extension over which $\s$ becomes hyperbolic.

\begin{thm}\label{T:main}
    Let $(A,\sigma)$ be a totally decomposable $K$-algebra with  involution of the first kind of even degree and with $\ind A\leq 2$. 
    For every $c\in\G(A,\s)$ there exist $d_1,d_2\in\mg{K}$ such that $\s_{K(\sqrt{d_i})}$ is hyperbolic for $i\in\{1,2\}$ and $c\in\D\lla d_1\rra\cdot \D\lla d_2\rra$.
\end{thm}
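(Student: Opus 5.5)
We split into three cases: $A$ split, $\s$ symplectic, and $\s$ orthogonal with $\ind A=2$. In each case the aim is to write $\G(A,\s)$ as $\D(\pi)$ or as $\D(\pi)\cdot\D(\wt\pi)$ for suitable Pfister-type forms. Then \Cref{Pfisterform}, or a direct subform argument, provides the quadratic extensions.

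\emph{Split orthogonal case.} By \cite[Theorem 1]{Bec08} we have $(A,\s)\simeq\Ad(\psi\otimes\pi)$ with $\psi$ odd-dimensional and $\pi$ a Pfister form. The degree is even, so $\pi$ is nontrivial. By \Cref{odddimnonzerodiv}, $\G(A,\s)=\G(\pi)=\D(\pi)$. Given $c\in\D(\pi)$, \Cref{Pfisterform} gives $d$ with $c\in\D\lla d\rra$ and $\pi_{K(\sqrt d)}$ hyperbolic. Then $\s_{K(\sqrt d)}$ is hyperbolic, and we take $d_1=d$, $d_2=1$; note that $\lla 1\rra$ is hyperbolic, so $1\in\D\lla1\rra=\mg K$.

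\emph{Symplectic case.} Here $A$ is Brauer equivalent to a quaternion algebra $Q$, and \Cref{Jacobson} gives $(A,\s)\simeq\Ad(\vf)\otimes(Q,\gamma)$ with $\G(A,\s)=\G(\vf\otimes\nu)$, and $\s_L$ hyperbolic iff $(\vf\otimes\nu)_L$ is hyperbolic. Total decomposability together with the Pfister Factor Conjecture \cite{Bec08} should show that $\vf\otimes\nu$ is similar to $\psi\otimes\pi$ with $\pi$ a Pfister form divisible by $\nu$. We would justify this by passing to $K(\nu)$ and using the split result; alternatively, the symplectic case of \cite{Bec08} gives it directly. The argument then proceeds as in the split case.

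\emph{Orthogonal case with $\ind A=2$.} This is the main step. We want to bring $(A,\s)$ into the shape of \Cref{ad}, namely $\Ad(\psi\otimes\rho)\otimes(Q,\tau)$ with $Q=(a,b)_K$ and $\disc\tau=a\sq K$.
\begin{enumerate}[(i)]
\item Write the totally decomposable algebra as $\Ad(\psi)\otimes\bigotimes(Q_i,\tau_i)$.
\item Use that the index is $2$, together with the decomposition results of \cite{Bec08}, to rearrange the quaternion factors so that all but one are split. This is done after possibly replacing pairs of factors, using that a tensor product of two quaternion algebras with orthogonal involutions, of index at most $2$, can be rewritten.
\item The split factors assemble into $\Ad(\rho)$ with $\rho$ a Pfister form.
\end{enumerate}
Obtaining step (ii) is where we expect the main obstacle. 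If it cannot be derived directly, we would try to reduce via $K(\nu)$ and \Cref{simsplitreduction}, and recover the Pfister form $\rho$ from the split-case decomposition over $K(\nu)$.

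Granting this shape, \Cref{ad} yields $\G(A,\s)=\D(\pi)\cdot\D(\wt\pi)$, so $c=c_1c_2$ with $c_1\in\D(\pi)$ and $c_2\in\D(\wt\pi)$. The two factors are handled separately.
\begin{itemize}
\item For $c_2$: if $\wt\pi$ is isotropic, then $\s$ is already hyperbolic and the claim is trivial. Otherwise, $\wt\pi=\alpha\otimes(\la b\ra\perp\rho')$ with $\alpha=\lla a\rra$ represents $c_2$. Using $\alpha\otimes\beta\simeq\lla a\rra\otimes\beta$ and choosing a binary subform of $\wt\pi$ of the shape $\lla d\rra$ up to the scalar, we write $c_2\in\D\lla d_2\rra$ with $\wt\pi_{K(\sqrt{d_2})}$ isotropic. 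Indeed, it suffices that $\wt\pi$ contains a subform $e\lla d_2\rra$ with $e$ suitably chosen. By \Cref{ad}, $\s_{K(\sqrt{d_2})}$ is then hyperbolic.
\item For $c_1\in\D(\pi)$: we would like, via \Cref{Pfisterform}, a $d_1$ with $\pi_{K(\sqrt{d_1})}$ hyperbolic. However, what we need is $\wt\pi_{K(\sqrt{d_1})}$ isotropic. So we must choose $d_1$ more carefully, for instance taking $d_1$ such that $\lla d_1\rra\subseteq\pi$ and $\lla d_1\rra$ meets the common part $\alpha\otimes\rho'$ of $\pi$ and $\wt\pi$. We would also allow shifting scalars between $c_1$ and $c_2$ so that both factors land in binary subforms sharing a vector with $\wt\pi$. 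This bookkeeping is the secondary difficulty.
\end{itemize}
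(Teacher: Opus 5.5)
Your global plan matches the paper: reduce via \cite{Bec08} to $\Ad(\psi\otimes\rho)\otimes(Q,\gamma)$, then use \Cref{ad} to split $c=c_1c_2$ with $c_1\in\D(\pi)$ and $c_2\in\D(\wt\pi)$. However, the decisive step is left open, and you place the difficulty on the wrong factor.

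For $c_1$ nothing more is needed. If $\pi_{K'}$ is hyperbolic, then $(A,\s)_{K'(\nu)}\simeq\Ad((\psi\otimes\pi)_{K'(\nu)})$ is hyperbolic, hence so is $\s_{K'}$ by \Cref{simsplitreduction}. So you never need $\wt\pi_{K(\sqrt{d_1})}$ to be isotropic.

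The real problem is with $c_2$, and your claim ``we write $c_2\in\D\lla d_2\rra$'' does not follow. Since $\wt\pi$ need not represent $1$, a binary subform of $\wt\pi$ through $c_2$ only yields $c_2\in e\,\D\lla d_2\rra$ for some scalar $e$. This $e$ must be moved into $c_1$, which requires $e\in\G(\pi)=\D(\pi)$ as well as $e\in\D(\wt\pi)$. The missing idea is to take $e$ from the common part of $\pi$ and $\wt\pi$, e.g.\ $e\in\D(\rho')$; this is possible because $\rho$ is nontrivial when $\deg A\equiv 0\bmod 4$. Then:
\begin{enumerate}[$(a)$]
\item $e^{-1}c_1\in\D(\pi)$, so \Cref{Pfisterform} gives $d_1$.
\item $e\wt\pi$ represents both $1$ and $ec_2$. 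So, when $\wt\pi$ is anisotropic, it has a binary subform $\lla d_2\rra$ with $ec_2\in\D\lla d_2\rra$ and $\wt\pi_{K(\sqrt{d_2})}$ isotropic.
\end{enumerate}
Then $c=(e^{-1}c_1)(ec_2)$. Your instinct to use $\alpha\otimes\rho'$ was right, but it is needed for this rescaling, not for choosing $d_1$.

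Three further points.
\begin{enumerate}[$(i)$]
\item The decomposition you flag as the main obstacle in step (ii) need not be derived. It is exactly \cite[Theorem 2]{Bec08} for orthogonal involutions with $\ind A\leq 2$, and \cite[Corollary]{Bec08} in the symplectic case; the paper simply invokes these.
\item In the split and symplectic cases, $d_2=1$ is wrong, since it would require $\s$ itself to be hyperbolic. Take $d_2=d_1=d$ and use $1\in\D\lla d\rra$.
\item \Cref{ad} rests on \Cref{L:Pfister-fufi-G} and needs $\rho$ nontrivial. So the case $\deg A\equiv 2\bmod 4$, where $\rho\simeq\la 1\ra$, must be treated separately; the paper does this via the discriminant.
\end{enumerate}
On the last point, be aware that in this case $\G(A,\s)$ can be strictly larger than $\D\lla d\rra$, where $d\sq{K}=\disc(\gamma)$. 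Take $Q=(a,b)_K$ nonsplit and $\disc(\gamma)=a$. The improper similitude $j$ of $(Q,\gamma)$ has multiplier $b\notin\D\lla a\rra$. For $(A,\s)=(Q,\gamma)$ the conclusion then fails for $c=b$. So in this case only $c\in\G^+(A,\s)$ can be handled, which is all that \Cref{main} requires.
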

\begin{proof}
It follows from the hypothesis, by \cite[Corollary]{Bec08} if $\s$ is symplectic and by \cite[Theorem 2]{Bec08} if $\s$ is orthogonal, that there exists a decomposition
$$(A,\s)\simeq \Ad(\psi\otimes \rho)\otimes (Q,\gamma)$$
with a $K$-quaternion algebra with involution of the first kind $(Q,\gamma)$,  a Pfister form $\rho$ over $K$ and an odd-dimensional quadratic form $\psi$ over $K$.

Assume first that $\s$ is symplectic. Then $\gamma$ is symplectic and hence equal to the canonical involution on $Q$.
Let $\nu$ denote the norm form of $Q$.
This is a $2$-fold Pfister form.
By \Cref{Jacobson} and \Cref{odddimnonzerodiv} we have that  
$\G(A,\s)=\G(\psi\otimes\rho\otimes\nu)=\G(\rho\otimes\nu)$ and,
for any field extension $L/K$, $(A,\s)_L$ is hyperbolic if and only if $(\psi\otimes \rho\otimes \nu)_L$ is hyperbolic, if and only if  $(\rho\otimes\nu)_L$ is hyperbolic.
Consider an element $c\in\G(A,\s)$.
Since $\G(A,\s)=\G(\rho\otimes \nu)$ and $\rho\otimes \nu$ is a Pfister form, we obtain by \Cref{Pfisterform} that 
$c\in \D\lla d\rra$ for some $d\in\mg{K}$ such that $(\nu\otimes\rho)_{K(\sqrt{d})}$ is hyperbolic.
Hence, we may take $d_1=d_2=d$
to have that $c=c\cdot 1\in\D\lla d_1\rra\cdot\D\lla d_2\rra$ 
and $\s_{K(\sqrt{d_i})}$ is hyperbolic for $i\in\{1,2\}$.

Assume now that $\s$ is orthogonal.
If $\deg A\equiv 2\bmod 4$, then $\rho\simeq \la 1\ra$ and we fix $d\in\mg{K}$ such that $\disc(\gamma)=d\sq{K}$ to have that $\G(A,\s)=\D\lla d\rra$ and $\s_{K(\sqrt{d})}$ is hyperbolic, so the claim holds trivially with $d_1=d_2=d$.
We may now assume that $\deg A$ is a multiple of $4$.
We choose $a,b\in\mg{K}$ such that $\disc(\gamma)=a\sq{K}$ and $Q\simeq (a,b)_K$.
Consider $c\in \G(A,\s)$. 
Let $\pi$ and $\wt{\pi}$ be defined as in \Cref{ad}. 
Note that $\rho'$ is a subform of both $\pi$ and $\wt{\pi}$.
Using \Cref{ad} we can decompose $c=c_1\cdot c_2$ with certain $c_1\in \D(\pi)$ and $c_2\in \D(\wt{\pi})$. 
We now use this factorization of $c$ to construct two suitable quadratic extensions where $\s$ becomes hyperbolic.
Since $\dim(\rho')=\frac{1}2\deg A-1\geq 1$, we may pick an element $e\in \D(\rho')$. Since $e, c_1\in \D(\pi)=\G(\pi)$, we obtain that $e^{-1}c_1\in \D(\pi)$. 
By \Cref{Pfisterform}, there exists some $d_1\in\mg{K}$ such that $e^{-1}c_1\in \D\lla d_1\rra$ and $\pi_{K(\sqrt{d_1})}$ is hyperbolic. Thus $\s_{K(\sqrt{d_1})}$ is hyperbolic.
Observe further that $1,ec_2\in \D(e\wt{\pi})$.
Hence $e\wt{\pi}$ has a binary subform $\beta$ with $1,ec_2\in\D(\beta)$. 
Then $\beta\simeq \lla d_2\rra$ for some $d_2\in\mg{K}$.
We obtain that $ec_2\in\D\lla d_2\rra$ and $\wt{\pi}_{K(\sqrt{d_2})}$ is isotropic, whereby $\s_{K(\sqrt{d_2})}$ is hyperbolic, by \Cref{ad}.
We thus have that $c=e^{-1}c_1(ec_2)\in\D\lla d_1\rra\cdot \D\lla d_2\rra$ and $\s_{K(\sqrt{d_i})}$ is hyperbolic for $i\in\{1,2\}$, as desired.
\end{proof}

\begin{cor}\label{main}   
Let $(A,\sigma)$ be a totally decomposable $K$-algebra with involution of the first kind such that $\ind A\leq 2$. Then $\mathbf{PSim}^+(A,\s)$ is $R$-trivial. 
\end{cor}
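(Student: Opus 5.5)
We derive the corollary from \Cref{T:main} via Merkurjev's description in \Cref{Mer:T1}. Fix a field extension $F/K$. The hypotheses are stable under base change: $(A,\s)_F$ is again totally decomposable, since the decomposition into $\Ad(\psi)$ and quaternion algebras with involution extends scalars, and $\ind A_F\leq\ind A\leq 2$. It therefore suffices to show that $\G^+(A,\s)=\sq{K}\Hyp(A,\s)$ for every such $(A,\s)$ over an arbitrary base field $K$ of characteristic different from $2$. Indeed, \Cref{Mer:T1} then gives $\mathbf{PSim}^+(A,\s)(F)/R=\{1\}$ for all $F/K$.

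By \Cref{Mer:T1} we already know $\sq{K}\Hyp(A,\s)\subseteq\G^+(A,\s)\subseteq\G(A,\s)$. So it is enough to show $\G(A,\s)\subseteq\Hyp(A,\s)$, and we split into cases according to $\deg A$.

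First suppose $\deg A$ is even. Take $c\in\G(A,\s)$. By \Cref{T:main} there are $d_1,d_2\in\mg{K}$ with $c\in\D\lla d_1\rra\cdot\D\lla d_2\rra$ and $\s_{K(\sqrt{d_i})}$ hyperbolic. We then check that every element of $\D\lla d_i\rra$ lies in $\Hyp(A,\s)$:
\begin{itemize}
\item If $d_i\notin\sq{K}$, then $L_i=K(\sqrt{d_i})$ is a quadratic field extension. Here $\D\lla d_i\rra=\N_{L_i/K}(\mg{L_i})$, and $(A,\s)_{L_i}$ is hyperbolic, so these norms lie in $\Hyp(A,\s)$ by definition.
\item If $d_i\in\sq{K}$, then $\s$ itself is hyperbolic. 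Taking $L=K$ shows that every element of $\mg{K}$ is a norm from an extension over which $\s$ is hyperbolic, so $\Hyp(A,\s)=\mg{K}$.
\end{itemize}
In both cases $c\in\Hyp(A,\s)$, and hence $\G(A,\s)=\G^+(A,\s)=\sq{K}\Hyp(A,\s)$.

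Now suppose $\deg A$ is odd. Then $\s$ is orthogonal and $A$ is split, since its index is a power of $2$ dividing an odd number. Thus $(A,\s)\simeq\Ad(\psi)$ with $\dim\psi$ odd. In this case $\mathbf{PSim}^+(A,\s)=\mathbf{PSim}(A,\s)$ is an adjoint group of type $\dyn{B}$ and is known to be rational. Alternatively, one argues directly: $c\psi\simeq\psi$ forces $c\in\sq{K}$ by comparing determinants, so $\G(A,\s)=\sq{K}$ and the quotient in \Cref{Mer:T1} is trivial. The only step needing care is this odd-degree case together with the bookkeeping showing that the hypotheses are preserved under base change; the substantive input is \Cref{T:main}.
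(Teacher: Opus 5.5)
Your proof is correct and follows the paper's route: the paper says only that the corollary follows immediately from \Cref{T:main} via \Cref{Mer:T1}, and you spell out that argument (norms from the quadratic extensions $K(\sqrt{d_i})$ lie in $\Hyp(A,\s)$, so $\G(A,\s)\subseteq\Hyp(A,\s)$ over every extension $F/K$, and the hypotheses are preserved under base change). Your separate treatment of the odd-degree case, where \Cref{T:main} does not apply and one gets $\G(A,\s)=\sq{K}$ directly, and of the case $d_i\in\sq{K}$, fills in details the paper leaves implicit.
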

\begin{proof}
Using \Cref{Mer:T1}, this follows immediately from \Cref{T:main}.
\end{proof}

\begin{qu}
Under the hypotheses of \Cref{main}, is  $\mathbf{PSim}^+(A,\s)$ stably rational?
\end{qu}

\begin{rem}
When $A$ is split or $\s$ is symplectic, a positive answer to this question readily follows from \cite[Prop.~7]{Mer96}. 
Since $R$-triviality is a necessary condition for stable rationality, \Cref{main} provides suggestive evidence toward stable rationality in the remaining case where $\s$ is orthogonal and $\ind A=2$.
\end{rem}

\section{An example of non-\texorpdfstring{$R$}{R}-triviality in degree \texorpdfstring{$8$}{8}}
\label{S:examples}

By \cite[\S42]{KMRT98}, any $K$-algebra with orthogonal involution $(A,\s)$ of degree $8$ with trivial discriminant occurs in a trialitarian triple, and $(A,\s)$ is  totally decomposable if and only if it occurs in a triple where one component is split and hence given by a quadratic form.
On the other hand, by \cite[Prop.~42.5]{KMRT98}, the groups of proper projective similitudes of the algebras of a trialitarian triple are isomorphic as algebraic groups.
Using this, one obtains examples of totally decomposable algebras with orthogonal involution of degree $8$ whose group of proper projective similitudes are not $R$-trivial.

We provide an explicit description of the decomposable algebra with orthogonal involution of degree $8$ obtained from a diagonalized quadratic form.
For a $K$-quaternion algebra $Q$, we denote by $\can_Q$ its canonical involution.

\begin{prop}\label{8dim-even-Clifford-computation}
Let $\vf$ be an $8$-dimensional quadratic form over $K$ and let $\tau$ denote the canonical involution on $\C_0(\vf)$.
%Furthermore, in this case $\mathbf{PSim}^+(\vf) \simeq  \mathbf{PSim}^+(A,\s)$. 
Let $c_1,\dots,c_8\in\mg{K}$ be such that $\vf\simeq \la c_1,\dots,c_8\ra$ and $d=c_1\cdots c_8$, whereby $\disc(\vf)=d\sq{K}$.
Let $Q_1=(-c_2c_4,-c_3c_4)_K$, $Q_2=(-c_6c_8,-c_7c_8)_K$, $Q_3=(c_1c_2c_3c_4,-c_1c_5)_K$ and let $\gamma$ be an orthogonal involution on $Q_3$ with $\disc(\gamma)=-c_1c_5\sq{K}$.
Set 
\[(A,\s)= (Q_1,\can_{Q_1})\otimes(Q_2,\can_{Q_2})\otimes (Q_3,\gamma).\] 
\begin{enumerate}[$(i)$]
    \item If $d\notin\sq{K}$, then 
\[(\C_0(\vf),\tau)\simeq (A,\s)_{K(\sqrt{d})}.\]
\item
If $d\in\sq{K}$, then 
\[(\C_0(\vf),\tau)\simeq (A,\s)\times (A,\s)\quad\text{ and }\quad\mathbf{PSim}^+(\vf) \simeq  \mathbf{PSim}^+(A,\s).\]
\end{enumerate}
\end{prop}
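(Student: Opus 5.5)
The plan is to write down explicit generators of $\C_0(\vf)$ and read off both the algebra and the involution. Fix an orthogonal basis $e_1,\dots,e_8$ of $V_\vf$ with $e_i^2=c_i$ in $\C(\vf)$, so $e_ie_j=-e_je_i$ for $i\neq j$. In $\C_0(\vf)$ I take
\[u_1=e_2e_4,\ v_1=e_3e_4,\quad u_2=e_6e_8,\ v_2=e_7e_8,\quad u_3=e_1e_2e_3e_4,\ v_3=e_1e_5,\quad z=e_1e_2\cdots e_8 .\]
Direct computation gives $u_1^2=-c_2c_4$, $v_1^2=-c_3c_4$, $u_1v_1=-v_1u_1$. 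The same holds for $u_2,v_2$ with $c_6,c_7,c_8$. Further $u_3^2=c_1c_2c_3c_4$, $v_3^2=-c_1c_5$, $u_3v_3=-v_3u_3$, and $z^2=d$ with $z$ central in $\C_0(\vf)$. Counting transpositions shows that each pair $\{u_i,v_i\}$ commutes elementwise with the other two pairs. Hence $\{u_i,v_i\}$ generates a copy of $Q_i$ for $i=1,2,3$. These three copies commute pairwise, so they generate a central simple $K$-subalgebra $B\simeq Q_1\otimes Q_2\otimes Q_3$ of dimension $64$.

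Next I identify $\C_0(\vf)$. The center of $\C_0(\vf)$ is $Z=K[z]\simeq K[T]/(T^2-d)$. Since $B$ is central simple over $K$, the multiplication map $B\otimes_K Z\to\C_0(\vf)$ is injective. Both sides have dimension $128$, so it is an isomorphism. If $d\notin\sq{K}$, then $Z\simeq K(\sqrt d)$ and $\C_0(\vf)\simeq A_{K(\sqrt d)}$. If $d\in\sq{K}$, then $Z\simeq K\times K$ and $\C_0(\vf)\simeq A\times A$.

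Then I compute $\tau$, which is the identity on $V_\vf$ and reverses products. It gives $\tau(e_ie_j)=-e_ie_j$ for $i\neq j$, $\tau(e_1e_2e_3e_4)=e_1e_2e_3e_4$ and $\tau(z)=z$. So $\tau$ preserves $B$ and is $K$-linear on $Z$. On the first two factors it negates the pure quaternions, so it is $\can_{Q_1}\otimes\can_{Q_2}$. On the third factor it fixes $u_3$ and negates $v_3$ and $u_3v_3$, so it is the orthogonal involution $\Int(u_3)\circ\can_{Q_3}$. It follows that $(\C_0(\vf),\tau)\simeq (B,\tau|_B)\otimes(Z,\id)$, which gives $(i)$ and the first half of $(ii)$ once $(B,\tau|_B)\simeq(A,\s)$ is established. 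For the second half of $(ii)$, I use that when $\disc\vf$ is trivial, $\mathbf{PSim}^+(\vf)$ acts on $\C_0(\vf)=A\times A$ preserving each factor. Alternatively I invoke \cite[Prop.~42.5]{KMRT98}, since $(\Ad(\vf),(A,\s),(A,\s))$ then form a trialitarian triple, and this gives $\mathbf{PSim}^+(\vf)\simeq\mathbf{PSim}^+(A,\s)$.

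The main obstacle is matching the involution on $Q_3$ with $\gamma$, and I have not checked whether it works. The naive formula $\disc(\Int(u)\circ\can)=\mathrm{nrd}(u)$ gives $-c_1c_2c_3c_4$, not the stated $-c_1c_5$. So I will recheck the discriminant convention of \cite[\S7]{KMRT98} carefully. If the mismatch persists, I would try two fixes. The first is to replace $u_3,v_3$ by other generators of the centralizer of $Q_1\otimes Q_2$ in $\C_0(\vf)$, namely $Q_3\otimes Z$, for example $v_3$ and $u_3v_3$ rescaled by $z$. The second is to show that tensoring with $(Q_1,\can)\otimes(Q_2,\can)$ lets one modify the orthogonal factor, so that the isomorphism class of $(A,\s)$ depends only on what is forced.
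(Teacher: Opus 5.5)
Your route is the paper's: the same six generators $e_2e_4,e_3e_4,e_6e_8,e_7e_8,e_1e_2e_3e_4,e_1e_5$, the same identification of the centre $Z=K[e_1\cdots e_8]$ with a dimension count, and the same appeal to \cite[Prop.~42.5]{KMRT98} through the trialitarian triple $(\Ad(\vf),(A,\s),(A,\s))$. As written, however, the proof is incomplete at exactly the step you flag. You never show $(Q_3,\tau|_{Q_3})\simeq(Q_3,\gamma)$, and the apparent discriminant mismatch comes from two slips, not from the statement.

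\emph{First slip.} $\tau$ reverses products, so $\tau(u_3v_3)=\tau(v_3)\tau(u_3)=-v_3u_3=u_3v_3$; an anti-automorphism that fixes $u_3$ and negates $v_3$ cannot negate $u_3v_3$. Hence $\tau|_{Q_3}$ fixes $1,u_3,u_3v_3$, and its space of skew-symmetric elements is $Kv_3$. This space is one-dimensional, as it must be for an orthogonal involution on a quaternion algebra. So $\tau|_{Q_3}=\mathrm{Int}(v_3)\circ\can_{Q_3}$. The involution $\mathrm{Int}(u_3)\circ\can_{Q_3}$ that you wrote down negates $u_3$ and is a different involution.

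\emph{Second slip.} Your discriminant formula has the wrong sign. With the convention of \cite[\S7]{KMRT98}, $\disc(\s)=(-1)^{\deg A/2}\mathrm{Nrd}(a)\sq{K}$ for any $\s$-skew-symmetric unit $a$. So $\disc(\mathrm{Int}(u)\circ\can_Q)=-\mathrm{Nrd}(u)\sq{K}=u^2\sq{K}$ for an invertible pure quaternion $u$. This is also the convention behind $\disc(\Ad\lla a\rra)=a\sq{K}$ used elsewhere in the paper. Hence $\disc(\tau|_{Q_3})=v_3^2\sq{K}=-c_1c_5\sq{K}$, exactly as stated.

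To finish, you still need two short remarks. First, orthogonal involutions on a quaternion algebra are determined up to isomorphism by their discriminant. If $u'^2=\lambda^2u^2$ with $\lambda\in\mg{K}$, then $u'$ and $\lambda u$ are conjugate in $\mg{Q}$ by some $g$, and $\mathrm{Int}(g)\circ(\mathrm{Int}(u)\circ\can_Q)\circ\mathrm{Int}(g)^{-1}=\mathrm{Int}(u')\circ\can_Q$. Thus $(Q_3,\tau|_{Q_3})\simeq(Q_3,\gamma)$. Second, the three $\tau$-stable subalgebras commute elementwise, so $\tau(x_1x_2x_3)=\tau(x_1)\tau(x_2)\tau(x_3)$ for $x_i\in Q_i$. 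Hence $\tau|_B=\can_{Q_1}\otimes\can_{Q_2}\otimes\tau|_{Q_3}$ and $(B,\tau|_B)\simeq(A,\s)$. With these, both repairs you propose are unnecessary.

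Also drop the first justification you offer for the last claim of $(ii)$. That proper similitudes of $\vf$ preserve each factor of $A\times A$ only gives a homomorphism $\mathbf{PSim}^+(\vf)\to\mathbf{PSim}^+(A,\s)$. The isomorphism really does come from \cite[Prop.~42.5]{KMRT98}, as in the paper.
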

\begin{proof}
We fix an orthogonal basis $(e_1,\ldots,e_8)$ for the quadratic form $\vf$ such that $e_i^2=c_i$ for $1\leq i\leq 8$.
The basis elements $e_1,\dots,e_8$ generate the Clifford algebra $\C(\vf)$, while the products $e_ie_j$ with $1\leq i<j\leq 8$ generate its even part $\C_0(\vf)$.
Let $Q_1$, $Q_2$ and $Q_3$ denote the $K$-subalgebras of $\C_0(\vf)$ generated by the pairs of elements $(e_2e_4,e_3e_4)$, $(e_6e_8,e_7e_8)$ and $(e_1e_2e_3e_4,e_1e_5)$, respectively.
By taking the squares of these generators, we see that 
\[Q_1\simeq (-c_2c_4,-c_3c_4)_K,\, Q_2\simeq (-c_6c_8,-c_7c_8)_K\text{ and }Q_3\simeq (c_1c_2c_3c_4,-c_1c_5)_K.\]
Note that $(e_ie_j)(e_ke_l)=(e_ke_l)(e_ie_j)$ when $i,j,k,l\in\{1,\dots,8\}$ are all distinct.
It follows that $xy=yx$ for any $x\in Q_i$, $y\in Q_j$ with $i,j\in\{1,2,3\}$ distinct.
We thus obtain a canonical $K$-algebra homomorphism $Q_1\otimes_K Q_2\otimes_K Q_3\to \C_0(\vf)$.
As the tensor product is simple, this is an embedding.
We denote by $A$ its image.

Since $\tau(e_ie_j)=-e_ie_j$ for $1\leq i<j\leq 8$, each of $Q_1$, $Q_2$ and $Q_3$ is stable under $\tau$ and such that $
\tau|_{Q_i}=\can_{Q_i}$ for $i\in\{1,2\}$.
We set $\s=\tau|_A$ and $\gamma=\tau|_{Q_3}$, and obtain that
\[(A,\s)= (Q_1,\can_{Q_1})\otimes (Q_2,\can_{Q_2})\otimes (Q_3,\gamma).\]
Since $\gamma(e_1e_2e_3e_4)=e_1e_2e_3e_4$ and $\gamma(e_1e_5)=-e_1e_5$, the involution $\gamma$ on $Q_3$ is orthogonal with  $\disc(\gamma)=-c_1c_5\sq{K}$.
We set $Z=K(e_1\cdots e_8)$.
Since $(e_1\dots e_8)^2=d\in\mg{K}$, we 
obtain that $Z$ is a quadratic $K$-subalgebra of $\C_0(\vf)$.
Since $\tau(e_1\cdots e_8)=e_1\cdots e_8$, we have  
$\tau|_Z=\id_Z$.
Since $e_1\cdots e_8$ commutes with $e_ie_j$ for any $i,j\in\{1,\dots,8\}$, we have that $Z$ is contained in the center of $\C_0(\vf)$.
Since $A\cap A(e_1\dots e_8)=0$, we obtain by comparing dimensions that $A\otimes_KZ=\C_0(\vf)$.
Therefore 
 \[(\C_0(\vf),\tau)\simeq (A\otimes_K Z,\s\otimes\id_Z).\]
If $d\notin\sq{K}$, then $Z\simeq K(\sqrt{d})$, and we obtain that 
\[(\C_0(\vf),\tau)\simeq (A,\s)_{K(\sqrt{d})}.\]
Assume now that $d\in\sq{K}$.
Then $Z\simeq K\times K$, 
and we conclude that 
\[(\C_0(\vf),\tau)\simeq (A,\s)\times (A,\s).\]
In particular, $\left((\End_K(V_\vf),\ad_\vf),(A,\s),(A,\s)\right)$ is a trialitarian triple, and this implies by \cite[Prop.~42.5]{KMRT98} that 
$\mathbf{PSim}^+(A,\s)\simeq \mathbf{PSim}^+(\vf)$.
\end{proof}

In \cite[Theorems 2 \& 3]{Gil97}, two different constructions are presented to produce $8$-dimensional quadratic forms $\vf$ of trivial discriminant for which $\mathbf{PSim}^+(\vf)$ is not $R$-trivial.
In view of \Cref{8dim-even-Clifford-computation}, those immediately provide examples of degree-$8$ totally decomposable $K$-algebras with orthogonal involution $(A,\s)$ for which 
$\mathbf{PSim}^+(A,\s)$ is not $R$-trivial.
Both constructions, however, will produce such examples where $\ind A=8$.
Using a construction from \cite[Theorem~7.5]{AB25}, we can nevertheless produce such examples where $\ind A=4$.

\begin{thm}\label{T:ind-4-non-R-trivial-ex}
    Let $k$ be a field with $\car(k)\neq 2$ and $a_0,a_1,a_2\in\mg{k}$ be such that $[k(\sqrt{a_0},\sqrt{a_1},\sqrt{a_2}):k]=8$.
    Let $K=k(t_1,t_2)$, the rational function field in two variables.
    Let $Q_1=(t_1,t_2)_K$ and $Q_2=(a_1t_1,a_2t_2)_K$.
    Then $$(A,\s)=\Ad\lla a_0\rra\otimes (Q_1,\can_{Q_1})\otimes (Q_2,\can_{Q_2})$$ is a totally decomposable $K$-algebra with orthogonal involution of degree $8$ with $\ind A=4$ 
    such that
    $\mathbf{PSim}^+(A,\s)$ is not $R$-trivial.
\end{thm}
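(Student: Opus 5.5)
Here is how I would prove the theorem. First I check the structural claims. Write $(A,\s)=\Ad\lla a_0\rra\otimes (Q_1,\can_{Q_1})\otimes(Q_2,\can_{Q_2})$. The involution is orthogonal, since it is a tensor product of one orthogonal and two symplectic factors. The degree is $2\cdot 2\cdot 2=8$. For the index, $A\sim Q_1\otimes Q_2$ in the Brauer group, so $\ind A=4$ is equivalent to $Q_1\otimes Q_2$ being a division algebra. By Albert's criterion this holds if and only if the Albert form $\la t_1,t_2,-t_1t_2,-a_1t_1,-a_2t_2,a_1a_2t_1t_2\ra$ is anisotropic over $K=k(t_1,t_2)$. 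I would verify this with the standard residue argument in $t_1$ and then $t_2$ (Springer's theorem). The residue forms are $\la 1,-a_1\ra$, $\la 1,-a_2\ra$, $\la 1,-a_1a_2\ra$ and their variants, and these are anisotropic over $k$ because $a_1,a_2,a_1a_2\notin\sq{k}$.

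For the non-$R$-triviality, I would relate $(A,\s)$ to an $8$-dimensional form via \Cref{8dim-even-Clifford-computation}. I would choose $c_1,\dots,c_8$ with trivial discriminant so that $Q_1\simeq(-c_2c_4,-c_3c_4)$, $Q_2\simeq(-c_6c_8,-c_7c_8)$, $Q_3$ split, and $\disc(\gamma)=-c_1c_5=a_0$ modulo squares. In that case $(Q_3,\gamma)\simeq\Ad\lla a_0\rra$. Concretely, I would try
\[
\vf=\la 1,-t_1,-t_2,t_1t_2\ra\perp -a_0\la 1,-a_1t_1,-a_2t_2,a_1a_2t_1t_2\ra ,
\]
possibly adjusting signs or scalars so that $c_1c_2c_3c_4$ is a square (which makes $Q_3$ split) and $d\in\sq{K}$. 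By \Cref{8dim-even-Clifford-computation}$(ii)$ this gives $\mathbf{PSim}^+(A,\s)\simeq\mathbf{PSim}^+(\vf)$. So it suffices to show that $\mathbf{PSim}^+(\vf)$ is not $R$-trivial, i.e., by \Cref{Mer:T1}, to find an extension $F/K$ with $\G(\vf_F)\neq\sq{F}\Hyp(\vf_F)$. The form $\vf=\lla t_1,t_2\rra\perp -a_0\lla a_1t_1,a_2t_2\rra$ is exactly the shape used in \cite[Theorem~7.5]{AB25}. I would invoke that result, presumably to get such an $F$, for instance by adjoining generic elements and passing to a function field, where a similarity factor survives that is not a product of norms from hyperbolizing extensions.

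The main obstacle is that last step: exhibiting $F$ and proving $\G^+(\vf_F)\not\subseteq\sq{F}\Hyp(\vf_F)$. If \cite[Theorem~7.5]{AB25} does not deliver it directly, I would argue by valuations as follows. Take $F$ to be a rational function field over $K$ and construct a similarity factor $c$ with an odd value under a suitable discrete valuation. Then show that every norm from a finite extension hyperbolizing $\vf$ has even valuation. The hypothesis $[k(\sqrt{a_0},\sqrt{a_1},\sqrt{a_2}):k]=8$ would be used here: it prevents the residue forms from becoming hyperbolic over the relevant residue fields.
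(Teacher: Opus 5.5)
Your proposal is correct. For the non-$R$-triviality it is the paper's argument: the same form $\vf=\lla t_1,t_2\rra\perp -a_0\lla a_1t_1,a_2t_2\rra$, \Cref{8dim-even-Clifford-computation}$(ii)$ to identify $\mathbf{PSim}^+(A,\s)$ with $\mathbf{PSim}^+(\vf)$, and then a direct appeal to \cite[Theorem~7.5]{AB25}. So your valuation fallback is not needed; as sketched it would not stand on its own anyway. No adjustment of signs is needed either. With your ordering, $c_1c_2c_3c_4=t_1^2t_2^2$, $-c_1c_5=a_0$ and $d\in\sq{K}$, so $Q_3=(1,a_0)_K$ is split and $(Q_3,\gamma)\simeq\Ad\lla a_0\rra$.

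The genuine difference is how you get $\ind A=4$. The paper only observes $\ind A\leq 4$. It then deduces equality from \Cref{main}: if $\ind A\leq 2$, the group would be $R$-trivial. You instead check directly, via Albert's criterion and Springer's theorem in $t_1$ and then $t_2$, that $Q_1\otimes Q_2$ is a division algebra. This is correct: the relevant residue forms are, up to scalars, $\lla a_1\rra$, $\lla a_2\rra$ and $\lla a_1a_2\rra$. It uses only that $a_1,a_2,a_1a_2\notin\sq{k}$. Your route is self-contained and makes the index statement independent of both \Cref{T:main} and the external non-$R$-triviality result. The paper's route is shorter and reuses its main theorem, but its index claim depends logically on \cite[Theorem~7.5]{AB25}.
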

\begin{proof}
Clearly $(A,\s)$ is totally decomposable and $\ind A\leq 4$.
Once we confirm that $\mathbf{PSim}^+(A,\s)$ is not $R$-trivial, it will follow by \Cref{main} that $\ind A=4$.

Consider the quadratic form $\vf=\lla t_1, t_2 \rra \perp -a_0\lla a_1t_1, a_2t_2 \rra$ over $K$.
We denote by $\tau$ the canonical involution on $\C_0(\vf)$.
Since $\vf$ has trivial discriminant, we obtain by \Cref{8dim-even-Clifford-computation} that 
\[(\C_0(\vf),\tau)= (A',\s')\times (A',\s')\quad\text{ and }\quad \mathbf{PSim}^+(\vf)=\mathbf{PSim}^+(A',\s')\]
for $(A',\s')=(Q_1,\can_{Q_1})\otimes (Q_2,\can_{Q_2})\otimes (Q_3,\gamma)$, where 
 $Q_3=(1,a_0)_K$ and $\gamma$ is an orthogonal involution on $Q_3$ with $\disc(\gamma)=a_0\sq{K}$.
Note that $Q_3$ is split and $(Q_3,\gamma)\simeq \Ad\lla a_0\rra$.
Therefore $(A',\s')\simeq (A,\s)$.
 This shows that
$\mathbf{PSim}^+(A,\s)\simeq \mathbf{PSim}^+(\vf)$.
By \cite[Theorem~7.5]{AB25}, $\mathbf{PSim}^+(\vf)$ is not $R$-trivial, hence neither is $\mathbf{PSim}^+(A,\s)$.
\end{proof}

\begin{cor}\label{C:ind4counterex} 
Let $K=k_0(X,Y,Z)$, the rational function field in three variables over a field $k_0$ of characteristic different from $2$.
Let $Q_1=(Y,Z)_K$,  $Q_2=((X-1)Y,(X+1)Z)_K$ and
$$(A,\s)=\Ad\lla X\rra\otimes (Q_1,\can_{Q_1})\otimes (Q_2,\can_{Q_2}).$$
Then $\mathbf{PSim}^+(A,\s)$ is not $R$-trivial, and there exists a field extension $F/K$ such that the natural map 
$\mathbf{PSim}^+(A,\s)(K)/R\to \mathbf{PSim}^+(A,\s)(F)/R$
is not surjective.
\end{cor}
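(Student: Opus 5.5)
We apply \Cref{T:ind-4-non-R-trivial-ex} with a suitable base field $k$, and then handle the surjectivity statement separately. Set $k=k_0(X)$, $a_0=X$, $a_1=X-1$, $a_2=X+1$, and $t_1=Y$, $t_2=Z$, so that $K=k(t_1,t_2)=k_0(X,Y,Z)$. With these choices the quaternion algebras $Q_1,Q_2$ and the algebra with involution $(A,\s)$ of the corollary are exactly those of \Cref{T:ind-4-non-R-trivial-ex}.

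It remains to check the hypothesis $[k(\sqrt{X},\sqrt{X-1},\sqrt{X+1}):k]=8$. This amounts to the classes of $X$, $X-1$, $X+1$ being linearly independent in $\scg{k}$. Any nontrivial product of these three elements is a product of distinct monic irreducible polynomials in $k_0[X]$. Here the assumption $\car(k_0)\neq 2$ is used, since it guarantees that $X$, $X-1$, $X+1$ are pairwise distinct (as $1\neq -1$). Such a product has odd valuation at some prime of $k_0[X]$, so it is not a square. Hence \Cref{T:ind-4-non-R-trivial-ex} gives that $\mathbf{PSim}^+(A,\s)$ is not $R$-trivial, and in particular there is a field extension $F/K$ with $\mathbf{PSim}^+(A,\s)(F)/R\neq\{1\}$.

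For the non-surjectivity it then suffices to show $\mathbf{PSim}^+(A,\s)(K)/R=\{1\}$. This step is the main obstacle. By \Cref{Mer:T1} we need $\G^+(A,\s)\subseteq\sq{K}\Hyp(A,\s)$ over $K$ itself. Since $\mathbf{PSim}^+(\vf)\simeq\mathbf{PSim}^+(A,\s)$ for $\vf=\lla Y,Z\rra\perp -X\lla (X-1)Y,(X+1)Z\rra$, this is equivalent to $\G^+(\vf)=\G(\vf)\subseteq\sq{K}\Hyp(\vf)$. My first attempt is to compute $\G(\vf)$ directly by residue arguments with respect to the $Y$- and $Z$-adic valuations (equivalently, via Springer's theorem over $k(Y)((Z))$ and then over $k((Y))$). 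The expectation is that the first and second residue forms reduce $\vf$ to forms over $k_0(X)$ built from the Pfister-type pieces $\la 1,-X\ra$ and $\la 1,-X(X\mp1)\ra$. This should force any similarity factor to lie in $\sq{K}$ times norms from quadratic extensions $K(\sqrt{d})$ that split $\vf$, for example $d=X$ and related elements.

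Alternatively, the proof of \cite[Theorem~7.5]{AB25} likely already establishes that $\mathbf{PSim}^+(\vf)(K)/R$ is trivial, with non-triviality appearing only after passing to an extension such as a function field. If so, I would cite that result instead, so the obstacle reduces to verifying that this form of $\vf$ matches the setup there.
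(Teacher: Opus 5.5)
There is a genuine gap in the non-surjectivity part. Your treatment of the first assertion is fine and matches the paper: with $k=k_0(X)$, $(a_0,a_1,a_2)=(X,X-1,X+1)$ and $(t_1,t_2)=(Y,Z)$, your square-class argument verifies the degree-$8$ hypothesis, and \Cref{T:ind-4-non-R-trivial-ex} applies.

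For non-surjectivity, you reduce everything to showing $\mathbf{PSim}^+(A,\s)(K)/R=\{1\}$ over $K=k_0(X,Y,Z)$ itself. Neither of your two suggestions proves this. The residue computation is only stated as an ``expectation'' of what the residue forms ``should force''. The appeal to \cite{AB25} is a guess about what its proof contains. More importantly, there is no reason to expect this triviality for an arbitrary $k_0$. The triviality result the paper uses, \cite[Theorem 7.5]{AB26}, takes as input that every $9$-dimensional quadratic form over the field is isotropic. That input fails over $k_0(X,Y,Z)$ when, for instance, $k_0=\mathbb{Q}$ or $\mathbb{R}$, since the field is then real. Accordingly, the paper asserts $\mathbf{PSim}^+(A,\s)(K)/R=\{1\}$ over $K$ itself only for $k_0=\cc$.

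The missing idea is that you never need triviality over $K$. It suffices that the map $\mathbf{PSim}^+(A,\s)(K)/R\to\mathbf{PSim}^+(A,\s)(F)/R$ factors through a trivial group. The paper argues as follows:
\begin{enumerate}[$(i)$]
\item Let $k'$ be an algebraic closure of $k_0$ and set $K'=k'(X,Y,Z)$. By Tsen--Lang, every $9$-dimensional form over $K'$ is isotropic, so \cite[Theorem 7.5]{AB26} gives $\mathbf{PSim}^+(A,\s)(K')/R=\{1\}$.
\item Apply \Cref{T:ind-4-non-R-trivial-ex} over $k=k'(X)$ rather than $k_0(X)$; your square-class argument works verbatim there, using only $\car(k')\neq 2$. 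This gives an extension $F/K'$ with $\mathbf{PSim}^+(A,\s)(F)/R\neq\{1\}$.
\item Since $K\subseteq K'\subseteq F$, the natural map from $\mathbf{PSim}^+(A,\s)(K)/R$ factors through the trivial group $\mathbf{PSim}^+(A,\s)(K')/R$. Its image is therefore trivial, so it is not surjective.
\end{enumerate}
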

\begin{proof}
Let $k'/k_0$ be an algebraic closure of $k_0$ and $K'=k'(X,Y,Z)$.
By Tsen-Lang Theory, every $9$-dimensional quadratic form over $K'$ is isotropic; see e.g.~\cite[Chap.~2, Cor.~15.3]{Scha85}.
In view of \cite[Theorem 7.5]{AB26}, this implies
that $\mathbf{PSim}^+(A,\s)(K')/R=\{1\}$.
On the other hand, 
it follows by \Cref{T:ind-4-non-R-trivial-ex} (applied over $k=k_0(X)$ with
$a_0=X$, $a_1=X-1$, $a_2=X+1$, 
$t_1=Y$ and $t_2=Z$) that $\mathbf{PSim}^+(A_{K'},\s_{K'})$ is not $R$-trivial.
Note that $\mathbf{PSim}^+(A_{K'},\s_{K'})(L)=\mathbf{PSim}^+(A,\s)(L)$ for any extension $L/K'$.
Hence there exists a field extension $F/K'$ such that 
$\mathbf{PSim}^+(A,\s)(F)/R\neq\{1\}$.
Since 
the natural map 
$\mathbf{PSim}^+(A,\s)(K)/R\to \mathbf{PSim}^+(A,\s)(F)/R$ factors through $\mathbf{PSim}^+(A,\s)(K')/R=\{1\}$, we conclude that this map is not surjective.
\end{proof}

The existence of these examples in degree $8$ and index $4$ leads us to speculate about a stronger statement on the failure of $R$-triviality.

\begin{qu}
    Let $(A,\s)$ be a $K$-algebra with orthogonal involution of trivial discriminant and such that $\deg A$ is a multiple of $8$ and $\ind A\geq 4$.
    Assume that $\s$ is not hyperbolic. Does it follow that $\mathbf{PSim}^+(A,\s)$ is not $R$-trivial?
\end{qu}

The tensor product of algebras with involution does not preserve $R$-triviality of the groups of proper projective similitudes.
More precisely, 
given a nonsplit $K$-quaternion algebra with orthogonal involution $(C,\gamma)$, it follows from \cite[Cor.~8]{BMT04} that, over some field extension $K'/K$, there exists a $K'$-algebra with orthogonal involution $(B,\tau)$ of degree $4$ such that $\mathbf{PSim}^+(C_{K'}\otimes B,\gamma_{K'}\otimes \tau)$ is not $R$-trivial. 
In that construction, $C$ is nonsplit and $\tau$ has nontrivial discriminant.
We obtain the following variation.

\begin{cor}
Let $(C,\gamma)$ be a $K$-algebra with orthogonal involution of even degree and with nontrivial discriminant.
Let $K'=K(X,Y,Z)$, $Q_1=(Y,Z)_{K'}$, $Q_2=(XY,(X+1)Z)_{K'}$ and $(B,\tau)=(Q_1,\can_{Q_1})\otimes (Q_2,\can_{Q_2})$.
Then the group
$\mathbf{PSim}^+(C\otimes_K B, \gamma\otimes\tau)$ is not $R$-trivial.
\end{cor}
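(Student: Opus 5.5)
Set $(A',\s')=(C,\gamma)\otimes(B,\tau)_{K'}$ and $L=K'$. The plan is to realise $(B,\tau)$, up to the factor $\Ad\lla X\rra$ from \Cref{C:ind4counterex}, as the totally decomposable degree-$8$ algebra coming from a concrete quadratic form, and then let $(C,\gamma)$ supply the missing quadratic twist. Write $\disc(\gamma)=\delta\sq{K}$ with $\delta\notin\sq{K}$.

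First I would compare $(B,\tau)$ with \Cref{C:ind4counterex}. There, with $X$ replaced by $X+1$ (so that $X\mapsto X+1$ sends $(X-1,X+1)$ to $(X,X+2)$), one would rather use \Cref{T:ind-4-non-R-trivial-ex} directly with suitable $a_0,a_1,a_2$. Over $k=K(X)$, take $a_1=X$, $a_2=X+1$, and $a_0=\delta$. The hypothesis $[k(\sqrt{\delta},\sqrt{X},\sqrt{X+1}):k]=8$ holds because $\delta\in K\setminus\sq{K}$ and $X$, $X+1$ are independent non-squares in $K(X)$ involving $X$. Then \Cref{T:ind-4-non-R-trivial-ex} gives that
$$\mathbf{PSim}^+\bigl(\Ad\lla\delta\rra\otimes(B,\tau)\bigr)$$
is not $R$-trivial over $K'$.

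Second, I need to pass from $\Ad\lla\delta\rra$ to $(C,\gamma)$. The natural route is Merkurjev's description in \Cref{Mer:T1}: find a field $F/K'$ and an element $c\in\G^+$ of $\Ad\lla\delta\rra\otimes B$ not in $\sq{F}\Hyp$. Then show that $c$ is also a proper multiplier of $(C\otimes B)_F$, and that $\Hyp((C\otimes B)_F)\subseteq\Hyp((\Ad\lla\delta\rra\otimes B)_F)$. A cleaner alternative is to reduce to the quaternion case. I would replace $K$ by the function field of a generic splitting situation over which $(C,\gamma)\simeq \Ad(\psi)\otimes\Ad\lla\delta\rra\otimes(\text{hyperbolic or split part})$. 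That is not possible in general, so instead I would use the product structure: $\gamma\otimes\tau$ becomes hyperbolic over $F'$ if and only if something forces $\delta$-related conditions.

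Concretely, I would argue as follows. Hyperbolicity of $(C\otimes B)_{F'}$ over a finite extension $F'$ should imply hyperbolicity of $\Ad\lla\delta\rra\otimes B$ over $F'$. I would obtain this by passing to $F'(\sqrt{\delta})$ and using that the discriminant of $C\otimes B$ relates to $\delta$. Multipliers of $\Ad\lla\delta\rra\otimes B$ then transfer to multipliers of $C\otimes B$, via similitudes of the form $1\otimes b$ with $b\in\Sim(B)$. The main obstacle is precisely this comparison of the groups $\Hyp$. I expect to need a specialisation or valuation argument in the variables $X,Y,Z$: the residue forms should recover the $\lla\delta\rra$-factor from $\disc(\gamma)$. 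If that fails, I would fall back on embedding the statement in the trialitarian picture of \Cref{8dim-even-Clifford-computation}.
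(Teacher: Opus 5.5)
Your first step is correct and is exactly the ingredient the paper uses at the end. \Cref{T:ind-4-non-R-trivial-ex} with $a_0=\delta$, $a_1=X$, $a_2=X+1$, $t_1=Y$, $t_2=Z$ shows that $\mathbf{PSim}^+(\Ad\lla\delta\rra\otimes(B,\tau))$ is not $R$-trivial.

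The whole difficulty, though, is passing from $\Ad\lla\delta\rra$ to $(C,\gamma)$. There you only list strategies, and the one you spell out fails. Hyperbolicity of $(C\otimes B)_{F'}$ does not imply hyperbolicity of $(\Ad\lla\delta\rra\otimes B)_{F'}$, because hyperbolicity of such a tensor product is not controlled by $\disc(\gamma)$ alone. Here is a counterexample. Take a nonsplit $Q=(a,b)_K$ with an orthogonal involution $\tau_0$ of discriminant $a\sq{K}$. \Cref{ad} with $\rho=\lla b\rra$ gives $\wt{\pi}=\lla a\rra\otimes\la b,-b\ra$, which is isotropic, so $\Ad\lla b\rra\otimes(Q,\tau_0)$ is hyperbolic. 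But $\Ad\lla b\rra\otimes\Ad\lla a\rra=\Ad\lla a,b\rra$ is not.

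Transporting multipliers through similitudes $1\otimes b$ with $b\in\Sim(B)$ does not help either. It only yields elements of $\G(B_F)$, which do not involve the factor $\Ad\lla\delta\rra$. Moreover, the proper ones are already trivial in the $R$-equivalence group, since $\mathbf{PSim}^+(B,\tau)$ (of type $\dyn{D}_2=\dyn{A}_1\times\dyn{A}_1$) is $R$-trivial. So the witness $c$ cannot be reached this way. The valuation and triality fallbacks are not developed at all.

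The missing idea is that you may first enlarge $K$, as long as $\delta$ stays a nonsquare, since non-$R$-triviality only requires one bad field extension. The reduction should be to an orthogonal sum with a hyperbolic form, not to the tensor decomposition you tried and rightly discarded.
\begin{enumerate}[$(i)$]
\item Over the function field $L_1$ of the Severi--Brauer variety of $C$, you have $\gamma_{L_1}=\ad_q$ for an even-dimensional form $q$ of discriminant $\delta$.
\item Then $q\perp-\lla\delta\rra$ has even dimension and trivial discriminant. Passing through the function fields of the quadrics in its generic splitting tower gives $L/K$ with $C_L$ split and $(C_L,\gamma_L)\simeq\Ad(\lla\delta\rra\perp\eta)$, $\eta$ hyperbolic.
\item $\delta\notin\sq{L}$ still holds, because $K$ is algebraically closed in all these function fields.
\item Over $L'=L(X,Y,Z)$, the algebras $(C\otimes B)_{L'}$ and $\Ad\lla\delta\rra_{L'}\otimes(B,\tau)_{L'}$ differ only by a hyperbolic summand. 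So they have the same multipliers and hyperbolicity behaviour over every extension, hence isomorphic groups of $R$-equivalence classes by \Cref{Mer:T1}.
\item Now apply \Cref{T:ind-4-non-R-trivial-ex} over $L(X)$; the degree-$8$ condition holds because $\delta\notin\sq{L}$.
\end{enumerate}
This is the paper's proof.
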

\begin{proof}
Set $(A,\s)=(C\otimes_K B,\gamma\otimes \tau)$ and fix $d\in\mg{K}$ with $\disc(\gamma)=d\sq{K}$.
Then $d\notin\sq{K}$.
We may choose a field extension $L/K$ such that $d\notin\sq{L}$, $C_L$ is split and $(C_L,\gamma_L)\simeq\Ad(\la 1,-d\ra\perp \eta)$ for a hyperbolic quadratic form $\eta$ over $L$.
Since $d\notin\sq{L}$, we have $[L(X)(\sqrt{d},\sqrt{X},\sqrt{X+1}):L(X)]=8$.
Let $L'=L(X,Y,Z)$. Then 
$(A',\s')=\Ad\lla d\rra_{L'}\otimes (B_{L'},\tau_{L'})$.
Since $(C_{L'},\gamma_{L'})\simeq \Ad(\lla d\rra_{L'}\perp \eta_{L'})$ and $\eta_{L'}$ is hyperbolic, we obtain that the $L'$-algebras with involution 
$(C_{L'},\gamma_{L'})$ and $\Ad\lla d\rra_{L'}$ are Witt equivalent, and hence so are 
$(A_{L'},\s_{L'})$ and $(A',\s')$. 
In particular, for every field extension $L''/L'$, we have 
\[\mathbf{PSim}^+(A_{L'},\s_{L'})(L'')/R\simeq \mathbf{PSim}^+(A',\s')(L'')/R.\]
By \Cref{T:ind-4-non-R-trivial-ex}, applied with $a_0=d,a_1=X$ and $a_2=X+1$, we have that $\mathbf{PSim}^+(A',\s')$ is not $R$-trivial. 
From this, we conclude that $\mathbf{PSim}^+(A,\s)$ is not $R$-trivial.
\end{proof}

This observation leads us to put forward the following question.

\begin{qu}
Let $(C,\gamma)$ be a nonhyperbolic $K$-algebra with involution of even degree.
Does there always exist a field extension $K'/K$ and a $K'$-algebra with involution $(B,\tau)$ such that $\mathbf{PSim}^+(B,\tau)$ is $R$-trivial and $\mathbf{PSim}^+(C\otimes B,\gamma\otimes\tau)$ is not $R$-trivial?
\end{qu}

\subsection*{Acknowledgments}
We acknowledge funding by the \emph{Bijzonder Onderzoeksfonds, University of Antwerp} (project \emph{BOF Opvang MSCA IF}, ID 51418).

\section*{Declarations}

\subsection*{Data availability}
There is no associated data, which is not already contained in the text.

\subsection*{Conflict of interest} The authors declare that there is no conflict of interest.

\bibliographystyle{plain}

\end{document}